\newcommand{\Ps}{\mathbf{P}}
\newcommand{\C}{\mathbf{C}}
\newcommand{\Q}{\mathbf{Q}}
\newtheorem{lemma}{Lemma}[section]
\newtheorem{proposition}[lemma]{Proposition}
\newtheorem{theorem}[lemma]{Theorem}
\theoremstyle{definition}
\newtheorem{definition}[lemma]{Definition}
\newtheorem{construction}[lemma]{Construction}
\newtheorem{example}[lemma]{Example}
\theoremstyle{remark}
\newtheorem{remark}[lemma]{Remark}
\DeclareMathOperator{\NL}{NL}
\DeclareMathOperator{\red}{red}
\DeclareMathOperator{\prim}{prim}
\DeclareMathOperator{\codim}{codim}
\title[Hodge loci of linear subvarieities]{Hodge loci associated with linear subspaces intersecting in codimension one}
\author{Remke Kloosterman}
\email{klooster@math.unipd.it}
\address{Universit\`a degli Studi di Padova,
Dipartimento di Matematica ``Tullio Levi-Civita",
Via Trieste 63,
35121 Padova, Italy}
\thanks{The author would like to thank Hossein Movasati for inviting him to IMPA in 2019. 
The author is a member of PRIN-2020 Project ``Curves, Ricci flat Varieties and their Interactions" and of INdAM-GNSAGA. This work was supported  by the BIRD-SID-2021 UniPD
project
 ``Moduli spaces of curves and hypersurface singularities".}
\date{\today}
\begin{document}
\begin{abstract}
Let $X\subset \Ps^{2k+1}$ be a smooth hypersurface containing two $k$-dimensional linear spaces $\Pi_1,\Pi_2$, such that $\dim \Pi_1\cap \Pi_2=k-1$.
In this paper we study the question whether the Hodge loci $\NL([\Pi_1]+\lambda[\Pi_2])$ and $\NL([\Pi_1],[\Pi_2])$ coincide. This turns out to be the case in a neighborhood of $X$ if $X$ is very general on $\NL([\Pi_1],[\Pi_2])$, $k>1$ and $\lambda\neq 0,1$. However, there exists a hypersurface $X$ for which $\NL([\Pi_1],[\Pi_2])$ is smooth at $X$, but $\NL([\Pi_1]+\lambda [\Pi_2])$ is singular for all $\lambda\neq0,1$. We expect that this is due to an embedded component of $\NL([\Pi_1]+\lambda[\Pi_2])$.
 The case $k=1$ was treated before by Dan, in that case $\NL([\Pi_1]+\lambda [\Pi_2])$ is nonreduced.

\end{abstract}
\maketitle

\section{introduction}

Let $k\geq 1$, $1\leq c \leq k+1$ and $d\geq 2+\frac{2}{k}$ be integers. 
Let $X \subset \Ps^{2k+1}$ be a hypersurface of degree $d$ containing two linear spaces $\Pi_1,\Pi_2$ of dimension $k$ such that $\dim \Pi_1\cap \Pi_2=k-c$. 

This paper is a second paper in a series concerning  the questions  for which $k,d,c$ and for which $\lambda\in \Q^*$ do we have $\NL([\Pi_1],[\Pi_2])=\NL([\Pi_1]+\lambda[\Pi_2])$ in a neighborhood of  $X$, and when do we have  $\NL([\Pi_1],[\Pi_2])=\NL([\Pi_1]+\lambda[\Pi_2])_{\red}$? 
In \cite[Chapters 18 and 19]{MovBook} Movasati considers  this  question. A first observation is that since $\NL([\Pi_1],[\Pi_2])$ is smooth at $X$ we have a positive answer to the first question if and only if $T_X\NL([\Pi_1],[\Pi_2])=T_X\NL([\Pi_1]+\lambda[\Pi_2])$.
This latter equality holds for all $X$ on  $\NL([\Pi_1],[\Pi_2])$ if $(c-1)(d-2)>2$ by \cite[Theorem 1.3]{RVLTs} (for the Fermat hypersurface) and \cite[Theorem 3.15]{RNKMov} (general case).

We now restrict to the case $(c-1)(d-2)\leq 2$.
In a previous paper \cite{RNKMov} we studied the cases where $c>1$. Then $(c,d)\in \{(2,4),(2,3),(3,3)\}$. We showed that if $k\geq 5$ (if $d=3)$ respectively $k\geq 2$ (if $d=4$) then there exists $X$ and $X'$  such that equality for the tangent spaces holds at $X$ for all but finitely many $\lambda$, but at $X'$ the locus $\NL([\Pi_1]+\lambda[\Pi_2])$ is reducible for all but finitely many $\lambda$.
On the other hand for $d=3, k\in \{3,4\}$ and $d=4,k=2$, we know that the two tangent spaces under consideration have different dimension for all $X$ and for all $\lambda$.

It remains to study the case $c=1$. If $\lambda=0$ then $\NL([\Pi_1]+\lambda [\Pi_2])$ paramterizes  hypersurfaces containing a $k$-plane and if $\lambda=1$ this locus parametrizes hypersurfaces containing a $k$-dimensional quadric. In both cases $\NL([\Pi_1]+\lambda[\Pi_2])$ is smooth and contains $\NL([\Pi_1],[\Pi_2])$ as a sublocus of positive codimension.

Now assume that $\lambda\in \Q\setminus\{0,1\}$.
 Maclean \cite{Maclean} considered the case $k=1,d=5$. She showed that the answer to the first question was negative, whereas the answer to the second question is affirmative, in particular the Hodge locus is nonreduced. Moreover, she showed that there are no further  nonreduced components of the Noether-Lefschetz locus for $k=1,d=5$. Dan \cite{DanNR} showed that if $k=1, d\geq 5, \lambda\in \Q \setminus\{0,1\}$ that $\NL([\Pi_1],[\Pi_2])\neq \NL([\Pi_1]+\lambda[\Pi_2])$ but $\NL([\Pi_1],[\Pi_2])=\NL([\Pi_1]+\lambda[\Pi_2])_{\red}$ in a neighborhood of $X$, i.e., that the locus is  nonreduced.
Recently, Duque and Villaflor \cite{DVjoin} announced a result which implies that $\NL([\Pi_1]+\lambda[\Pi_2])$ is singular at the Fermat hypersurface if $d\geq6$.

In this paper we focus on the case $c=1$. For $d=3,4$ the picture will be very similar to the case $c >1; (c-1)(d-2)\leq 2$, see Proposition~\ref{prpLowDeg}. However, for $d\geq 5$ the results differ significantly: For $k=1$ we will reprove Dan's result \cite{DanNR}  by different methods.
For $k\geq 2$ we will prove 

\begin{theorem} Let $d\geq 3$ be an integer.  Moreover,  suppose that $k>4$ if $d=3$, that $k>2$ if $d=4,5$ and $k\geq 2$ if $d\geq 6$.

Then there exists a hypersurface $X\subset \Ps^{2k+1}$ of degree $d$ containing two $k$-planes $\Pi_1,\Pi_2$ such that $\dim \Pi_1\cap \Pi_2=k-1$, and a finite set $\Sigma$ such that for all  $\lambda\in \Q\setminus \Sigma$ we have $\NL([\Pi_1]+\lambda[\Pi_2])=\NL([\Pi_1],[\Pi_2])$. Moreover, if $d\geq 6$ then we can take $\Sigma=\{0,1\}$.
\end{theorem}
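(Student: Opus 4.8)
As recalled in the introduction, since $\NL([\Pi_1],[\Pi_2])$ is smooth at $X$ and is contained in $\NL([\Pi_1]+\lambda[\Pi_2])$, the desired equality in a neighbourhood of $X$ is equivalent to the equality of Zariski tangent spaces $T_X\NL([\Pi_1]+\lambda[\Pi_2])=T_X\NL([\Pi_1],[\Pi_2])$. So the plan is to compute these tangent spaces inside the Jacobian ring and compare them. Writing $X=V(f)$, letting $R$ be the Jacobian ring of $f$ and $\sigma=(k+1)(d-2)$, Griffiths' identification of the primitive cohomology with the Jacobian ring identifies $H^{k,k}_{\prim}(X)$ with $R_\sigma$ and the differential of the period map with multiplication $R_d\to R_{\sigma+d}$. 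Denoting by $\gamma_i\in R_\sigma$ the primitive class of $\Pi_i$ and by $\mu_i\colon R_d\to R_{\sigma+d}$, $g\mapsto g\gamma_i$, the multiplication maps, I would identify $T_X\NL([\Pi_1],[\Pi_2])=\ker\mu_1\cap\ker\mu_2$ and $T_X\NL([\Pi_1]+\lambda[\Pi_2])=\ker(\mu_1+\lambda\mu_2)$. The theorem then becomes the purely algebraic assertion that $\ker(\mu_1+\lambda\mu_2)=\ker\mu_1\cap\ker\mu_2$ for all but finitely many $\lambda$, and for all $\lambda\neq0,1$ when $d\geq6$.

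The second step is a linear-algebra reduction. Passing to $\bar R_d=R_d/(\ker\mu_1\cap\ker\mu_2)$, the induced maps $\bar\mu_1,\bar\mu_2$ have trivial common kernel, and a value $\lambda$ is \emph{bad} precisely when $\ker(\bar\mu_1+\lambda\bar\mu_2)\neq0$, i.e.\ when $-\lambda$ is a generalized eigenvalue of the pencil $(\bar\mu_1,\bar\mu_2)$. Hence the bad locus is finite as soon as the pencil is nondegenerate, that is, as soon as $\bar\mu_1+\lambda_0\bar\mu_2$ is injective for a single $\lambda_0$: indeed injectivity at $\lambda_0$ forces $\dim\bar R_d\le\dim R_{\sigma+d}$ and makes the maximal minors polynomials in $\lambda$ that do not all vanish identically, so that equality holds for every $\lambda$ outside their common finite zero locus. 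Thus it suffices to exhibit one value $\lambda_0$ with $\ker(\mu_1+\lambda_0\mu_2)=\ker\mu_1\cap\ker\mu_2$, and the sharper statement for $d\geq6$ amounts to showing that the only generalized eigenvalues are those forced by geometry, namely $\lambda=0$ (the plane $\Pi_1$) and $\lambda=1$ (the quadric $\Pi_1\cup\Pi_2$).

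For the third step I would make the geometry explicit. Choosing coordinates $y_0,\dots,y_{k+1},z_1,\dots,z_k$ so that $\Pi_1=V(y_0,z_1,\dots,z_k)$ and $\Pi_2=V(y_1,z_1,\dots,z_k)$, the condition $\Pi_1,\Pi_2\subset X$ forces $f=y_0y_1\,q+\sum_{i=1}^k z_i a_i$ with $\deg q=d-2$ and $\deg a_i=d-1$. Feeding the expressions $f=y_0(y_1q)+\sum z_ia_i$ and $f=y_1(y_0q)+\sum z_ia_i$ into the determinantal formula for the class of a linear subspace from \cite{RNKMov} (see also \cite{RVLTs}), one finds $\gamma_1=c\,y_1 D_1$ and $\gamma_2=c\,y_0 D_2$ in $R_\sigma$, where $c$ is a nonzero constant and $D_1,D_2$ are the $(k+1)\times(k+1)$ determinants of the Jacobian matrices of $(q,a_1,\dots,a_k)$ taken with respect to $(y_0,z_1,\dots,z_k)$ and $(y_1,z_1,\dots,z_k)$; in particular $D_1$ and $D_2$ share their $z$-columns and differ only in the first column. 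This shared structure is what makes the pencil $(\mu_1,\mu_2)$ tractable.

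Finally, I would specialize $q$ and the $a_i$ to a convenient (e.g.\ monomial or Fermat-adapted) choice for which the Jacobian ring, the two classes $\gamma_1,\gamma_2$, and the multiplication maps $\mu_i$ can all be computed, verify that $X$ is smooth and that $\NL([\Pi_1],[\Pi_2])$ is smooth at $X$, and then check injectivity of $\mu_1+\lambda_0\mu_2$ on $\bar R_d$ for a single $\lambda_0$. The numerical hypotheses on $k$ for each $d$ enter precisely here, guaranteeing that the relevant graded pieces $R_d$ and $R_{\sigma+d}$ are large enough for the required generic behaviour. I expect the \emph{main obstacle} to be this last computation: controlling $\ker\mu_i$ and the generalized eigenvalues of the pencil inside the Jacobian ring, and, in the case $d\geq6$, proving that $0$ and $1$ are the only eigenvalues. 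The latter should follow by combining the geometric meaning of these two values with the explicit shapes $\gamma_1=c\,y_1D_1$ and $\gamma_2=c\,y_0D_2$, since any further eigenvector would produce an element of $\bar R_d$ annihilated simultaneously by $y_1D_1$ and $y_0D_2$ up to a nonzero proportionality constant other than those associated with $\Pi_1$ and $\Pi_1\cup\Pi_2$.
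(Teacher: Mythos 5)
Your reduction is sound and runs parallel to the paper's own machinery: the identification of the tangent spaces with kernels of multiplication by the Griffiths representatives, the passage to the quotient $\bar R_d$, and the observation that the bad $\lambda$ form the (finite, once the pencil is nondegenerate) degeneracy locus of a pencil of linear maps is exactly the content of Lemma~\ref{lemmakey} and Theorem~\ref{thmTsp} (the paper phrases it as the left kernel of a pairing $\psi_1+\nu(\lambda)\psi_2$ on $(S/I_1\cap I_2)_d\times(S/I_1\cap I_2)_{kd-2k-2}$, with an injective reparametrization $\nu$ absorbing the choice of trivializations of the socles). The coordinate normalization $f=y_0y_1q+\sum z_ia_i$ is also correct.

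The genuine gap is that you stop exactly where the theorem's content begins: you never produce the hypersurface $X$ nor verify injectivity of $\mu_1+\lambda_0\mu_2$ for a single $\lambda_0$, and you acknowledge as much. This verification is not a routine genericity check; it is the entire proof. For $d\geq 6$ the paper takes the explicit $X_{k,d}$ cut out by $x_0x_1(x_0^{d-2}+x_1^{d-2}+x_2^{d-4}x_3^2)+\cdots$, chosen so that $S/I_1$ and $S/I_2$ have monomial bases $B_1,B_2$ whose union is a basis of $S/I_1\cap I_2$; the Gram matrix of $\psi_1+\nu(\lambda)\psi_2$ in these bases then splits into explicit blocks $(1)$, $(\nu(\lambda))$, $(1\ \ \nu(\lambda))$ and one $3\times3$ block of determinant $\nu(\lambda)(\nu(\lambda)+1)$, so the pencil degenerates only at $\nu(\lambda)\in\{0,-1\}$, i.e.\ $\lambda\in\{0,1\}$ (Lemma~\ref{lemGram} and Theorem~\ref{thmHighDeg}). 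Your suggestion that the identification of $\{0,1\}$ as the only eigenvalues ``should follow by combining the geometric meaning of these two values with the explicit shapes $\gamma_1=c\,y_1D_1$, $\gamma_2=c\,y_0D_2$'' is not an argument: geometry explains why $0$ and $1$ are eigenvalues, but excluding further eigenvalues requires the full matrix computation. Moreover, for $3\leq d\leq 5$ your template cannot work uniformly: the numerical conditions are not about $R_d$ being ``large enough'' in a vague sense but about $d\leq kd-2k-2$ (so that a left kernel is not forced for dimension reasons, as it is for $k=1$ and the excluded low-$k$ cases), and the paper handles these degrees by a separate induction on $k$ from explicit base cases $(d,k)=(4,3),(5,3),(3,5)$ using results imported from \cite{RNKMov}, yielding only an unspecified finite $\Sigma$ there. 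None of this case analysis appears in your proposal.
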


Since the Zariski closure of $\NL([\Pi_1],[\Pi_2])$  in the locus of smooth degree $d$ hypersurfaces is  smooth and irreducible, the above theorem implies
that $\NL(\Pi_1+\lambda\Pi_2)$ is smooth and equals $\NL(\Pi_1,\Pi_2)$ for $k\geq 2$ in a neighborhood of $X$, provided that $X$ is sufficiently general and $\lambda\neq0,1$. However, as mentioned above, for special $X$ there is a difference in tangent space dimension and we expect that this is due to the fact that the  locus of split hypersurfaces yields an embedded component of $\NL([\Pi_1]+\lambda[\Pi_2])$.

The organization of this paper is as follows: In Section~\ref{secPrelim} we discuss the essential ingredients for our proof. For a more extensive discussion see \cite{RNKMov}. In Section~\ref{secResultA} we reprove Dan's result in Section~\ref{secResultB} we present our examples and prove the main result.

\section{Preliminaries}\label{secPrelim}
Fix an integers $k\geq 1, d\geq 2+\frac{2}{k}$. Let $S=\C[x_0,\dots,x_{2k+1}]$.
Let $X$ be a smooth hypersurface of degree $d$ in $\Ps^{2k+1}$. 
Let $J$ be the Jacobian ideal of $X$.

\begin{construction}\label{conIdeal}
Let $\gamma \in H^{k,k} (X,\C)\cap H^{2k}(X,\Q)$ be a Hodge class such that $\gamma_{\prim}$ is not zero, i.e., $\gamma$ is not a multiple of the class associated with the intersection of $X$ with $k$ hyperplanes.

 Griffiths' work \cite{GriRat} on the period map yields  an identification of $H^{2k-p,p}(X)_{\prim}$ with $(S/J)_{(p+1)d-2k-2}$.
Moreover, by Carlson-Griffiths \cite{CarGri} there is a choice of  isomorphisms $H^{2k,2k}(X)\cong \C\cong (S/J)_{(d-2) (2k+2)}$ such that
 the cupproduct \[ H^{2k-p,p}(X)_{\prim} \times H^{p,2k-p}_{\prim} \to H^{2k,2k}(X)\] equals the multiplication map
\[  (S/J)_{(p+1)d-2k-2} \times (S/J)_{(2k-p+1)d-2k-2} \to (S/J)_{(d-2)(2k+2)}.\]
Let $f_\gamma \in S_{(d-2)(k+1)}$ be the image of $\gamma_{\prim}$. Consider now the subspace  $f_\gamma^\perp$ in $(S/J)_{(d-2)(k-1)}$. Let $W$ be its lift to $S_{(d-2)(k+1)}$. 
Then $J_{(d-2)(k+1)}\subset W$.  Since $J_{d-1}$ is base point-free and $(d-2)(k+1)\geq d-1$ we have that $W$ is also base point free.
Let $I(\gamma)$ be the largest ideal of $S$ such that $I(\gamma)_{(k+1)(d-2)}=W$. 

Then $I(\gamma)$ is the \emph{ideal associated} with the Hodge class $\gamma$.
\end{construction}
\begin{remark}\label{rmkSocDeg}
By construction the algebra $S/I(\gamma)$ is an Artinian Gorenstein algebra of socle degree $(d-2)(k+1)$, see \cite[Remark 3.2]{RNKMov}.
\end{remark}
\begin{example}
Let  $\ell_0,\dots,\ell_k$ be linear forms, and let $g_0,\dots,g_k$ be forms of degree $d-1$, such that $\ell_0,\dots,\ell_k,g_0,\dots,g_k$ is a regular sequence. Let $f=\sum_{i=0}^k\ell_ig_i$ and $X=V(f)$. Suppose $X$ is smooth then the ideal associated with the Hodge class $\Pi=V(\ell_0,\dots,\ell_k)$ is
\[ \langle \ell_0,\dots,\ell_{k},g_0,\dots,g_k\rangle.\]
\end{example}

\begin{definition}
Suppose that $X\subset \Ps^{2k+1}$ is a smooth hypersurface  such that  $H^{k,k} (X,\C)_{\prim}\cap H^{2k}(X,\Q)_{\prim}$ is nonzero.
Let $\gamma \in H^{k,k} (X,\C)\cap H^{2k}(X,\Q)$ be a Hodge class such that $\gamma_{\prim}$ is nonzero.

Let $f\in \Ps(S_d)$ be such that $X=V(f)$.
Let $U$ be a small analytic  neighborhood of $[f]$ in $\Ps(S_d)$. Then we define \emph{the Hodge locus of $\gamma$}, denoted by $\NL(\gamma)$, as the set $X'\in U$ where under parallel transport the class $\gamma$ remains of type $(k,k)$ on $X'$.
\end{definition}
\begin{remark}
The definition of the Hodge locus $\NL(\gamma)$ depends on the choice of parallel transport, hence it is only well-defined if $U$ is simply connected. Moreover, the intersection of $U$ with the Zariski closure of $\NL(\gamma)$ in $U_d$ may be strictly larger than $\NL(\gamma)$.

The Hodge locus has a natural structure of an analytic scheme. Cattani-Deligne-Kaplan \cite{DelKap} showed that $\NL(\gamma)$ is actually algebraic.
\end{remark}

\begin{lemma}\label{lemCodimNL} Suppose that $k\geq 2$ and $d\neq 2+\frac{2}{k}$. Then we have
\[ \codim T_X\NL(\gamma) = \codim_{S_d} (I(\gamma))_d\]
\end{lemma}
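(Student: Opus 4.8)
The plan is to identify both sides with the rank of one and the same cup-product map, and then to match them using the Gorenstein duality built into the definition of $I(\gamma)$. Throughout write $e=(d-2)(k+1)$ and $\sigma=(d-2)(2k+2)=2e$ for the middle degree and the socle degree of the Artinian Gorenstein algebra $S/J$, and note that the hypotheses $k\geq 2$, $d\neq 2+\tfrac 2k$ together with the standing assumption $d\geq 2+\tfrac 2k$ give $e-d=k(d-2)-2>0$, i.e.\ $d<e$.

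First I would pin down the tangent space via the infinitesimal variation of Hodge structure. By Griffiths' description of the period map and the Carlson--Griffiths identifications recalled in Construction~\ref{conIdeal}, the derivative in the direction of a first order deformation $g\in S_d$ of the Hodge class $\gamma$ has $(k-1,k+1)$-component equal to $g\cdot f_\gamma\in (S/J)_{e+d}\cong H^{k-1,k+1}(X)_{\prim}$, and this component is the obstruction to $\gamma$ remaining of type $(k,k)$ to first order. Hence, inside $T_{[f]}\Ps(S_d)=S_d/\langle f\rangle$, we have $T_X\NL(\gamma)=\ker(\,\cdot f_\gamma\,)$. Because $f\in J$ by the Euler relation, multiplication by $f_\gamma$ annihilates $f$ and therefore factors through $S_d/\langle f\rangle$ without altering its image; consequently
\[ \codim T_X\NL(\gamma)=\rank\bigl(S_d\xrightarrow{\;\cdot f_\gamma\;}(S/J)_{e+d}\bigr).\]

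It now suffices to establish the purely algebraic identity
\[ (I(\gamma))_d=\ker\bigl(S_d\xrightarrow{\;\cdot f_\gamma\;}(S/J)_{e+d}\bigr),\]
since this yields $\codim_{S_d}(I(\gamma))_d=\dim S_d-\dim(I(\gamma))_d=\rank(\,\cdot f_\gamma\,)$, which is the displayed codimension above. To unwind the left-hand side I would use that $I(\gamma)$ is the largest ideal with $(I(\gamma))_e=W$, where $W\subseteq S_e$ is the preimage of the hyperplane $f_\gamma^\perp\subseteq (S/J)_e$. As $S/I(\gamma)$ has socle degree $e$ by Remark~\ref{rmkSocDeg} and $d<e$, the degree-$d$ piece is the downward saturation
\[ (I(\gamma))_d=\{\,g\in S_d : g\cdot S_{e-d}\subseteq W\,\};\]
this collection is visibly an ideal, since $(\ell g)\cdot S_{e-d-1}=g\cdot(\ell\,S_{e-d-1})\subseteq g\cdot S_{e-d}\subseteq W$ for every linear form $\ell$.

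The crux is then an adjunction for the Gorenstein pairing $(S/J)_a\times (S/J)_{\sigma-a}\to (S/J)_\sigma\cong\C$, which is perfect in each degree and is induced by multiplication in $S/J$, hence associative. For $g\in S_d$ and $s\in S_{e-d}$ we have $gs\in W$ iff $\overline{gs}\cdot f_\gamma=0$ in $(S/J)_\sigma$, and associativity rewrites $\overline{gs}\cdot f_\gamma=\bar s\cdot(\bar g\, f_\gamma)$ with $\bar s\in (S/J)_{e-d}$ and $\bar g\,f_\gamma\in (S/J)_{e+d}$. Thus $g\in (I(\gamma))_d$ iff $\bar g\,f_\gamma$ is orthogonal to all of $(S/J)_{e-d}$ under the perfect pairing $(S/J)_{e-d}\times (S/J)_{e+d}\to\C$, i.e.\ iff $\bar g\,f_\gamma=0$ in $(S/J)_{e+d}$. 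This is exactly the required identity, completing the proof. I expect the adjunction step and the tangent-space identification, rather than any numerical computation, to be the delicate points: the latter rests on the standard but nontrivial fact that the Zariski tangent space of the analytic scheme $\NL(\gamma)$ is cut out by the first-order IVHS condition, for which I would appeal to the framework of \cite{RNKMov}. The hypotheses $k\geq 2$ and $d\neq 2+\tfrac 2k$ enter exactly to force $e-d>0$, placing the deformation degree $d$ strictly below the middle degree; this is what validates both the downward-saturation description of $(I(\gamma))_d$ and the use of the pairing $(S/J)_{e-d}\times (S/J)_{e+d}\to\C$ in its nondegenerate range, excluding the boundary case in which the middle degree collapses onto $d$.
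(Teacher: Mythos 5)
Your argument is correct: the identification of $T_X\NL(\gamma)$ with $\ker(S_d\xrightarrow{\cdot f_\gamma}(S/J)_{e+d})$ via the IVHS, the description of $(I(\gamma))_d$ as the downward saturation of $W$, and the adjunction through the perfect Macaulay pairing on $S/J$ together give exactly the stated equality, with $d\neq 2+\tfrac{2}{k}$ guaranteeing $e-d>0$ as you say. The paper itself only cites \cite[Lemma 3.6]{RNKMov} here, and your proof is the standard argument that reference carries out, so there is nothing substantively different to compare.
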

\begin{proof}
See \cite[Lemma 3.6]{RNKMov}
\end{proof}

\begin{definition}\label{defCom}
Let $X\subset \Ps^{2k+1}$ be a smooth hypersurface of degree $d$. Let $U\subset U_d$ be a small neighborhood of $[X]$.
Pick  Hodge classes $\gamma_1,\dots,\gamma_r$ on $X$. Then the  \emph{Hodge locus of $\gamma_1,\dots,\gamma_r$}, denoted by $\NL(\gamma_1,\dots,\gamma_r)$, is the locus in $U$ where all $\gamma _i$ remain a Hodge class. 
\end{definition}
\begin{remark}\label{rmkIns}
From the above discussion it is immediate that
\[  T_X\NL(\gamma_1,\dots,\gamma_r)=\cap_{i=1}^r I(\gamma_i)_d=\cap_{i=1}^r T_X\NL(\gamma_i).\]
\end{remark}

The following result should be well-known to experts, a proof can be found in \cite[Proposition 3.9]{RNKMov}:
\begin{proposition}\label{prpSmooth} Let $X$ be a hypersurface of degree $d\geq 2+\frac{2}{k}$ containing two hyperplanes $\Pi_1,\Pi_2$. Then the Hodge locus $\NL(\Pi_1,\Pi_2)$ is smooth in a neighborhood of $X$.
\end{proposition}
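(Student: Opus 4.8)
The plan is to pin down both the Zariski tangent space and the local dimension of $\NL(\Pi_1,\Pi_2)$ at $X$ and to check that they agree, which by the regularity criterion forces smoothness. By Remark~\ref{rmkIns} the tangent space is already under control, namely $T_X\NL(\Pi_1,\Pi_2)=I([\Pi_1])_d\cap I([\Pi_2])_d$, so the whole problem is to compute the dimension of this intersection and to exhibit a smooth sublocus of $\NL(\Pi_1,\Pi_2)$ of the same dimension through $X$.

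For the tangent space I would work in coordinates adapted to the configuration. Writing $\Lambda=\Pi_1\cap\Pi_2$ for the common $(k-1)$-plane and choosing coordinates so that $\Pi_1=V(x_0,x_{k+2},\dots,x_{2k+1})$ and $\Pi_2=V(x_1,x_{k+2},\dots,x_{2k+1})$, any $f$ vanishing on $\Pi_1\cup\Pi_2$ can be written $f=x_0x_1q+\sum_{j\ge k+2}x_jp_j$. The description of $I([\Pi_i])$ recalled above then gives explicit generators, and the essential observation is that in the sum $I([\Pi_1])+I([\Pi_2])$ the factor $q$ drops out, leaving $\langle x_0,x_1,x_{k+2},\dots,x_{2k+1}\rangle+\langle p_{k+2},\dots,p_{2k+1}\rangle$. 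Restricting to $\Lambda$, the forms $p_{k+2}|_\Lambda,\dots,p_{2k+1}|_\Lambda$ are exactly the partials of $f$ along the normal directions to $\Lambda$, so smoothness of $X$ along $\Lambda$ is equivalent to their having no common zero on $\Lambda\cong\Ps^{k-1}$, i.e. to their forming a regular sequence. Consequently both $I([\Pi_i])$ and $I([\Pi_1])+I([\Pi_2])$ are degree-$d$ pieces of Artinian complete intersections, and $\dim\bigl(I([\Pi_1])_d\cap I([\Pi_2])_d\bigr)$ is given by an explicit Hilbert-series computation that turns out to be independent of $X$.

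For the lower bound I would produce a smooth family inside $\NL(\Pi_1,\Pi_2)$. Let $W$ be the space of pairs of $k$-planes meeting in a $(k-1)$-plane; since any two such pairs are projectively equivalent, $W$ is a single $\Aut(\Ps^{2k+1})$-orbit, hence smooth and irreducible, and the space of degree $d$ forms vanishing on the union of the two planes has constant dimension along $W$. Thus the incidence variety over $W$ is a projective bundle, and its image $M\subseteq\Ps(S_d)$ is an irreducible locus contained in $\NL(\Pi_1,\Pi_2)$ and passing through $X$; its dimension is then computed from the Grassmannian bundle structure, using that the projection to $\Ps(S_d)$ is generically finite onto $M$.

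The proof concludes by matching the two counts: the Hilbert-series value of $\dim\bigl(I([\Pi_1])_d\cap I([\Pi_2])_d\bigr)$ equals $\dim M$. Granting this, $\dim M\le\dim_X\NL(\Pi_1,\Pi_2)\le\dim T_X\NL(\Pi_1,\Pi_2)=\dim M$, so all three quantities agree and $\NL(\Pi_1,\Pi_2)$ is regular, hence smooth, at $X$ and coincides with $M$ near $X$; in particular the a priori possible strata where the two planes degenerate to a different incidence type are automatically subsumed. The main obstacle is exactly this numerical identity between the complete-intersection Hilbert count and the Grassmannian-bundle dimension, together with the verification that the parametrization of $M$ is generically finite (which is where constraints on $k$ and $d$ beyond $d\ge 2+\tfrac2k$ would enter); the regular sequence produced by smoothness of $X$ is what makes the tangent-space side computable and, pleasantly, constant in $X$.
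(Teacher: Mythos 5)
The paper does not prove Proposition~\ref{prpSmooth} in-house; it defers to \cite[Proposition 3.9]{RNKMov}. Your strategy --- compute $\dim T_X\NL(\Pi_1,\Pi_2)=\dim(I_1\cap I_2)_d$ via Hilbert functions of complete intersections, exhibit the family $M$ of hypersurfaces containing a pair of $k$-planes meeting in a $(k-1)$-plane, and match the two counts --- is the standard route and certainly the intended one. The tangent-space half of your argument is sound: with $f=x_0x_1q+\sum_{j\ge k+2}x_jp_j$ one gets $I_1=\langle x_0,x_{k+2},\dots,x_{2k+1},x_1q,p_{k+2},\dots,p_{2k+1}\rangle$, its analogue for $I_2$, and $I_1+I_2=\langle x_0,x_1,x_{k+2},\dots,x_{2k+1},p_{k+2},\dots,p_{2k+1}\rangle$; smoothness of $X$ along $\Pi_1$, $\Pi_2$ and $\Lambda$ makes all three ideals Artinian complete intersections, and since the Koszul syzygies start in degree $2(d-1)>d$ the Hilbert functions in degree $d$ are forced. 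The numerical identity you leave as ``the main obstacle'' does check out on the nose: $h_{S/I_1\cap I_2}(d)=2\bigl[\binom{d+k}{k}-(k+1)^2\bigr]-\bigl[\binom{d+k-1}{k-1}-k^2\bigr]$, while $\codim_{\Ps(S_d)}\Ps(I(\Pi_1\cup\Pi_2)_d)-\dim W=2\binom{d+k}{k}-\binom{d+k-1}{k-1}-\bigl[(k+1)^2+k+(k+1)\bigr]$, and these agree because $(k+1)^2-k^2=2k+1$. (When computing $\dim W$, note that the $k$-planes containing a fixed $(k-1)$-plane form a $\Ps^{k+1}$, not a Grassmannian of lines.)

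The one genuine gap is the generic finiteness of the incidence variety over $M$, which you not only leave open but suggest might require hypotheses on $k,d$ beyond $d\ge 2+\frac{2}{k}$; since the proposition carries no such hypotheses, as written your argument would prove a weaker statement. In fact no extra hypotheses are needed, and the gap closes with the same regular-sequence observation you already used: for $X$ smooth and $d\ge 3$ (which $d\ge 2+\frac2k$ guarantees), the normal bundle sequence $0\to N_{\Pi/X}\to\cO_\Pi(1)^{k+1}\to\cO_\Pi(d)\to 0$ shows that a global section of $N_{\Pi/X}$ is a tuple of linear forms $(s_0,\dots,s_k)$ with $\sum s_ig_i|_\Pi=0$; as $g_0|_\Pi,\dots,g_k|_\Pi$ is a regular sequence of $(d-1)$-forms in $k+1$ variables, its syzygies live in degrees $\ge 2d-2>d$, so $H^0(N_{\Pi/X})=0$. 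Hence $F_k(X')$ is finite for every smooth $X'$, so the fibre of the incidence variety over any smooth member of $M$ (in particular over a general one, since $M$ is irreducible and contains the smooth $X$) is finite, giving $\dim_XM=\dim\mathcal I=\dim T_X\NL(\Pi_1,\Pi_2)$ and completing your regularity argument. Two minor points: the partials of $f$ along $x_0$ and $x_1$ vanish identically on $\Lambda$, so smoothness along $\Lambda$ is governed only by the $k$ forms $p_j|_\Lambda$ (your phrasing ``exactly the partials along the normal directions'' overstates this, though your conclusion is right); and one should say explicitly that the branch of $M$ at $X$ obtained by deforming the pair $(\Pi_1,\Pi_2)$ lies in $\NL(\Pi_1,\Pi_2)$ because the classes of the deformed planes are the parallel transports of $[\Pi_1],[\Pi_2]$.
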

\begin{remark} 
Note that if $X$ contains two pairs of $k$-planes intersecting in codimension $c$ then the Zariski closure of $\NL(\Pi_1,\Pi_2)$ is singular at $X$.
\end{remark}

\begin{definition}Let $X\subset \Ps^{2k+1}$ be a smooth hypersurface of degree $d$. Pick  Hodge classes $\gamma_1,\dots,\gamma_r$ on $X$.
Fix a point $(a_1:\dots :a_r)\in \Ps^{r-1}(\Q)$. 
We say that we have \emph{excess tangent dimension} at $(a_1:\dots:a_r)$ if
\[ T_X\NL(\gamma_1,\dots,\gamma_r) \subsetneq T_X\NL(a_1\gamma_1+\dots+a_r\gamma_r)\]
\end{definition}

\begin{lemma}\label{lemmakey}
Suppose $X\subset \Ps^{2k+1}$ is a smooth hypersurface of degree $d$ such that $X$ contains two subvarieties $Y_1,Y_2$ of dimension $k$. Suppose that $[Y_1]_{\prim}$ and $[Y_2]_{\prim}$ are linearly independent in $H^{2k}(X,\Q)_{\prim}$.

For $j=1,2$, let $I_j$ be the ideal associated with the Hodge class $[Y_j]$, cf. Construction~\ref{conIdeal}. Fix   isomorphisms $\sigma_j:(S/I_j)_{(k+1)(d-2)}\cong \C$. Let $\psi_j$ be the pairing
\[ (S/I_1\cap I_2)_d\times (S/I_1\cap I_2)_{kd-2k-2}\to  (S/I_1\cap I_2)_{(k+1)(d-2)} \to (S/I_j)_{(k+1)(d-2)}\stackrel{\sigma_j}{\longrightarrow}\C.\]

For $\lambda \in \Q^*$ let $I^{(\lambda)}$ be the ideal associated to $[Y_1]+\lambda [Y_2]$. Then there exists an injective function $\nu: \C^*\to\C^*$ such that
\[T_X\NL([Y_1]+\lambda[Y_2])/T_X\NL([Y_1],[Y_2])= I^{(\lambda)}_d/(I_1\cap I_2)_d=\ker_L (\psi_1+\nu(\lambda)\psi_2)\]
\end{lemma}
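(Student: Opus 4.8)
The plan is to translate everything into the language of the Artinian Gorenstein algebra $S/J$ and its perfect pairing, so that each of the three objects in the claimed chain of equalities becomes the left kernel of an explicit bilinear pairing. The first equality is essentially formal: by Lemma~\ref{lemCodimNL} and the identification underlying it we have $T_X\NL(\gamma)=I(\gamma)_d$ for any Hodge class $\gamma$, and by Remark~\ref{rmkIns} we get $T_X\NL([Y_1],[Y_2])=(I_1)_d\cap(I_2)_d=(I_1\cap I_2)_d$; since $\NL([Y_1],[Y_2])\subseteq\NL([Y_1]+\lambda[Y_2])$ we also obtain $(I_1\cap I_2)_d\subseteq I^{(\lambda)}_d$, so the quotient on the left is defined and equals $I^{(\lambda)}_d/(I_1\cap I_2)_d$. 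Note that linear independence of $[Y_1]_{\prim},[Y_2]_{\prim}$ guarantees that the primitive part of $[Y_1]+\lambda[Y_2]$ is nonzero for every $\lambda$, so that Construction~\ref{conIdeal} indeed produces an ideal $I^{(\lambda)}$.

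The heart of the argument is a uniform description of $I(\gamma)_d$. Writing $\langle\,\cdot\,,\cdot\,\rangle_J$ for the perfect Gorenstein pairing of $S/J$ into its one–dimensional socle in degree $(d-2)(2k+2)$, I would first record that the Griffiths identification $\gamma\mapsto f_\gamma\in S_{(d-2)(k+1)}$ is $\C$-linear, so that $f_{[Y_1]+\lambda[Y_2]}=f_1+\lambda f_2$ with $f_j:=f_{[Y_j]}$. Next, unwinding the ``largest ideal'' clause of Construction~\ref{conIdeal}, the degree-$d$ part of $I(\gamma)$ is the colon
\[ I(\gamma)_d=\{g\in S_d : g\,h\in W_\gamma \text{ for all } h\in S_{kd-2k-2}\},\]
where $W_\gamma$ is the lift of $f_\gamma^{\perp}$. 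Since $W_\gamma/J$ is the hyperplane annihilated by $\langle f_\gamma,\cdot\,\rangle_J$ in degree $(d-2)(k+1)$, and $d+(kd-2k-2)=(d-2)(k+1)$, this reads $I(\gamma)_d=\ker_L\Psi_\gamma$ for the pairing $\Psi_\gamma(g,h):=\langle f_\gamma,\,gh\rangle_J$ on $S_d\times S_{kd-2k-2}$. By the linearity above, $\Psi_{[Y_1]+\lambda[Y_2]}=\Psi_1+\lambda\Psi_2$, whence $I^{(\lambda)}_d=\ker_L(\Psi_1+\lambda\Psi_2)$, while $(I_1\cap I_2)_d=\ker_L\Psi_1\cap\ker_L\Psi_2$.

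It then remains to compare $\Psi_j$ with $\psi_j$. Because $I_1\cap I_2\subseteq I_j$, the pairing $\Psi_j$ descends to $(S/I_1\cap I_2)_d\times(S/I_1\cap I_2)_{kd-2k-2}$, and it factors as multiplication into $(S/I_j)_{(d-2)(k+1)}$ followed by a $\C$-valued functional on this socle line. Both $\Psi_j$ and $\psi_j$ arise this way, the only difference being the identification of the socle of $S/I_j$ with $\C$: the functional induced by $\langle f_j,\cdot\,\rangle_J$ is some nonzero multiple $c_j\sigma_j$ of $\sigma_j$, so $\Psi_j=c_j\psi_j$ as pairings on $S/(I_1\cap I_2)$. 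Consequently, passing to the quotient by $(I_1\cap I_2)_d=\ker_L\Psi_1\cap\ker_L\Psi_2$,
\[ I^{(\lambda)}_d/(I_1\cap I_2)_d=\ker_L(c_1\psi_1+\lambda c_2\psi_2)=\ker_L\bigl(\psi_1+\tfrac{c_2}{c_1}\lambda\,\psi_2\bigr),\]
so that $\nu(\lambda):=\tfrac{c_2}{c_1}\lambda$ is the desired map; it is linear with nonzero slope, hence an injective function $\C^*\to\C^*$.

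The main obstacle I anticipate is the last comparison step: verifying cleanly that the Gorenstein pairing of $S/J$ twisted by $f_j$ descends to, and agrees up to a nonzero scalar with, the socle pairing of $S/I_j$. This requires matching the Artinian Gorenstein structure of $S/I_j$ of socle degree $(d-2)(k+1)$ (Remark~\ref{rmkSocDeg}) against that of $S/J$ through the perp construction, and checking that the induced functional on the socle line does not vanish, i.e. that $c_j\neq0$; this is precisely where $f_j\neq0$, equivalently $[Y_j]_{\prim}\neq0$, enters. Everything else is bookkeeping with the degree identity $d+(kd-2k-2)=(d-2)(k+1)$ and the colon description of $I(\gamma)_d$.
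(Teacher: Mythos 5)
The paper does not prove this lemma itself but cites it from \cite[Lemma 3.12]{RNKMov}, so there is no in-paper argument to compare against; your reconstruction is, however, correct and fully consistent with the framework the paper sets up (the colon description of $I(\gamma)_d$ from Construction~\ref{conIdeal}, the identification $T_X\NL(\gamma)=I(\gamma)_d$ implicit in Remark~\ref{rmkIns}, linearity of $\gamma_{\prim}\mapsto f_\gamma$, and the Gorenstein socle comparison of Remark~\ref{rmkSocDeg}). Your explicit $\nu(\lambda)=(c_2/c_1)\lambda$ also matches the behaviour the paper later uses (e.g.\ $\nu(1)=-1$ in Theorem~\ref{thmHighDeg}), so I see no gap.
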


\begin{proof} See \cite[Lemma 3.12]{RNKMov}.
\end{proof}

\begin{theorem}\label{thmTsp}
Suppose $X\subset \Ps^{2k+1}$ is a smooth hypersurface of degree $d$ such that $X$ contains two subvarieties $Y_1,Y_2$ of dimension $k$. Suppose that $[Y_1]_{\prim}$ and $[Y_2]_{\prim}$ are linearly independent in $H^{2k}(X,\Q)_{\prim}$.

Let $I_j$ be the ideal associated with the Hodge class $[Y_j]$, cf. Construction~\ref{conIdeal}.
Suppose that the left kernel of the multiplication map
\[ (I_1+I_2/I_2)_d\times (I_1+I_2/I_2)_{kd-2k-2}\to (S/I_2)_{(k+1)(d-2)}\]
is zero. Then for all but finitely many $\lambda \in\Q^*$  we have
\[ \codim T_X\NL([Y_1]+\lambda [Y_2])=\codim T_X \NL([Y_1],[Y_2])\]
\end{theorem}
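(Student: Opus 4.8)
The plan is to convert the statement about codimensions into the vanishing of the left kernel of a pencil of pairings, and then to prove that vanishing by passing to the function field $\C(\mu)$. By Remark~\ref{rmkIns} together with Lemma~\ref{lemCodimNL} we have $T_X\NL([Y_1],[Y_2])=(I_1\cap I_2)_d$, and this is contained in $T_X\NL([Y_1]+\lambda[Y_2])$ for every $\lambda$; by Lemma~\ref{lemmakey} the quotient is $\ker_L(\psi_1+\nu(\lambda)\psi_2)$ with $\nu$ injective. Hence the two codimensions agree exactly when this left kernel is zero, and since $\nu$ is injective it suffices to prove that $\ker_L(\psi_1+\mu\psi_2)=0$ for all but finitely many $\mu\in\C^*$. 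As this is the condition that the matrix $M_1+\mu M_2$ of the pencil have full row rank, which holds on a Zariski-open set of $\mu$, it is enough to show that the left kernel over $\C(\mu)$ vanishes.

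First I would set up the Gorenstein bookkeeping. Write $R=S/(I_1\cap I_2)$. By Remark~\ref{rmkSocDeg} each $S/I_j$ is Artinian Gorenstein of socle degree $(k+1)(d-2)$, and $d$ and $kd-2k-2$ are complementary degrees, so the pairing $\langle\,,\,\rangle_j$ on $S/I_j$ induced by $\sigma_j$ is perfect. Writing $v_j$ for the image of $v\in R$ in $S/I_j$, one has $\psi_j(v,w)=\langle v_j,w_j\rangle_j$, and therefore $(\psi_1+\mu\psi_2)(v,w)=\langle v_1,w_1\rangle_1+\mu\langle v_2,w_2\rangle_2$. Using that $R\twoheadrightarrow S/I_j$ is surjective and that the pairings are perfect one gets $\ker_L\psi_j=(I_j)_d/(I_1\cap I_2)_d=:K_j$; since $(I_1)_d\cap(I_2)_d=(I_1\cap I_2)_d$, the map $R_d\hookrightarrow(S/I_1)_d\oplus(S/I_2)_d$, $v\mapsto(v_1,v_2)$, is injective, so $K_1\cap K_2=0$. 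Finally, via the isomorphism $(I_1+I_2)/I_2\cong I_1/(I_1\cap I_2)$, the assignment $v\mapsto v_2$ identifies $K_1$ with $((I_1+I_2)/I_2)_d$ and $\tilde K_1:=(I_1)_{kd-2k-2}/(I_1\cap I_2)_{kd-2k-2}$ with $((I_1+I_2)/I_2)_{kd-2k-2}$; under these identifications the hypothesis says precisely that $\psi_2$ restricted to $K_1\times\tilde K_1$ has zero left kernel.

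The heart of the proof is to rule out a nonzero left kernel over $\C(\mu)$, and here I would stress what makes it delicate: a fixed-$\mu$ argument using only $K_1\cap K_2=0$ cannot succeed, because a pencil may have a nonzero left kernel for every $\mu$ with no common kernel vector (a Kronecker $L^{T}$-block). So suppose for contradiction that a nonzero kernel vector exists; clearing denominators gives $v(\mu)=\sum_i v^{(i)}\mu^i\in R_d\otimes\C[\mu]$ with $(\psi_1+\mu\psi_2)(v(\mu),-)=0$ and, after dividing out powers of $\mu$, with $v^{(0)}\neq0$. The $\mu^0$-coefficient of this identity gives $v^{(0)}\in\ker_L\psi_1=K_1$, while the $\mu^1$-coefficient gives $\psi_2(v^{(0)},-)=-\psi_1(v^{(1)},-)$ as functionals on $R_{kd-2k-2}$ (with $v^{(1)}=0$ if $v$ is constant). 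Now I test against $w\in\tilde K_1$, for which $w_1=0$: the right-hand side is $\langle(v^{(1)})_1,0\rangle_1=0$, so $\langle(v^{(0)})_2,w_2\rangle_2=0$ for all $w_2\in((I_1+I_2)/I_2)_{kd-2k-2}$. Since $(v^{(0)})_2\in((I_1+I_2)/I_2)_d$, the hypothesis forces $(v^{(0)})_2=0$; together with $(v^{(0)})_1=0$ and the injectivity of $R_d\hookrightarrow(S/I_1)_d\oplus(S/I_2)_d$ this yields $v^{(0)}=0$, a contradiction.

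Thus the left kernel over $\C(\mu)$ vanishes, the generic $M_1+\mu M_2$ has full row rank, and the theorem follows. The main obstacle, as indicated, is precisely the passage from the ``corner'' nondegeneracy supplied by the hypothesis to nondegeneracy of the whole pencil; the device that overcomes it is to work over $\C(\mu)$ and extract the lowest coefficient $v^{(0)}$, since it is exactly this coefficient that lands in $K_1$ and can be fed into the hypothesis. Equivalently, one may phrase the last step as an induction on $\deg_\mu v(\mu)$: having shown $v^{(0)}=0$, replace $v(\mu)$ by $v(\mu)/\mu$ and repeat, the base case of a constant vector being handled directly by $K_1\cap K_2=0$.
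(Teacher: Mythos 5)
The paper does not reprove this statement; it only cites \cite[Theorem 3.13]{RNKMov}, so there is no in-text proof to compare against. Your argument is correct and self-contained, and it sits squarely inside the framework the paper sets up: Lemma~\ref{lemmakey} reduces everything to the left kernel of the pencil $\psi_1+\nu(\lambda)\psi_2$, and your passage to $\C(\mu)$ with extraction of the lowest-order coefficient $v^{(0)}\in\ker_L\psi_1=(I_1)_d/(I_1\cap I_2)_d$ correctly feeds the hypothesis (nondegeneracy of $\psi_2$ on $K_1\times\tilde K_1$, via $I_1/(I_1\cap I_2)\cong (I_1+I_2)/I_2$) into a contradiction; injectivity of $\nu$ and openness of the full-row-rank condition then give finiteness of the bad set of $\lambda$. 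The degree bookkeeping ($d+(kd-2k-2)=(k+1)(d-2)$, Gorenstein duality from Remark~\ref{rmkSocDeg}) checks out, and testing the $\mu^1$-coefficient against $w\in\tilde K_1$, where $\psi_1(v^{(1)},w)$ vanishes automatically, is exactly the right device; your observation that a fixed-$\mu$ argument from $K_1\cap K_2=0$ alone cannot work is also well taken.
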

\begin{proof} See \cite[Theorem 3.13]{RNKMov}
\end{proof}

\section{Case $k=1$}\label{secResultA}
In the case $k\geq 2$ we will use Theorem~\ref{thmTsp}  to prove that there is no excess tangent space dimension for all but finitely many $\lambda$. However, in order to apply this Theorem successfully one needs that $d\leq kd-2k-2$ holds, i.e., if $k\geq 2$ then $d$ should be sufficiently large. However, for $k=1$ one cannot apply this result.
 Actually, as Dan \cite{DanNR} noted, also the conclusion of the Theorem does not hold. In this case one has that $\NL(\Pi_1+\lambda\Pi_2)$ is nonreduced for all but two values of $\lambda$:

\begin{proposition}[{Dan \cite{DanNR}}]\label{prpk1} Suppose $k=1$ and $d\geq 5$ then $\NL(\Pi_1+\lambda \Pi_2)$ is nonreduced for every $\lambda \in \Q\setminus \{0,1\}$.
\end{proposition}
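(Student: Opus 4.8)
The plan is to deduce nonreducedness from two facts: an \emph{excess tangent dimension} statement valid for every $\lambda\in\Q^*$, together with the equality of reduced loci $\NL([\Pi_1]+\lambda[\Pi_2])_{\red}=\NL([\Pi_1],[\Pi_2])$ valid only for $\lambda\neq 0,1$. Since $\NL([\Pi_1],[\Pi_2])$ is smooth by Proposition~\ref{prpSmooth}, the first fact forces the tangent space of the scheme $\NL([\Pi_1]+\lambda[\Pi_2])$ at $X$ to be strictly larger than its reduced dimension. First I would choose coordinates so that $\Pi_1=V(x_2,x_3)$ and $\Pi_2=V(x_1,x_3)$, meeting at $p=[1:0:0:0]$ and spanning the plane $V(x_3)$. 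Then $X\supset \Pi_1\cup\Pi_2$ forces $f\in(x_3,x_1x_2)$, so I can write $f=x_3A+x_1x_2B$ with $\deg A=d-1$ and $\deg B=d-2$.

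For the tangent computation I would apply the Example in Section~\ref{secPrelim}. Writing $f=x_2\cdot(x_1B)+x_3\cdot A$ and $f=x_1\cdot(x_2B)+x_3\cdot A$ identifies the associated ideals as $I_1=\langle x_2,x_3,x_1B,A\rangle$ and $I_2=\langle x_1,x_3,x_2B,A\rangle$ (for general $A,B$ the relevant forms are a regular sequence). Hence $I_1+I_2=\langle x_1,x_2,x_3,A\rangle$, and smoothness of $X$ at $p$ forces the coefficient of $x_0^{d-1}$ in $A$ to be nonzero, so that $I_1+I_2=\langle x_1,x_2,x_3,x_0^{d-1}\rangle$ and $S/(I_1+I_2)\cong\C[x_0]/(x_0^{d-1})$. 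Feeding the exact sequence
\[0\to S/(I_1\cap I_2)\to S/I_1\oplus S/I_2\to S/(I_1+I_2)\to 0\]
into Hilbert functions and using that each $S/I_j$ is Gorenstein of socle degree $2d-4$ (Remark~\ref{rmkSocDeg}), the degree-$d$ and degree-$(d-4)$ terms of $S/I_1\oplus S/I_2$ cancel, leaving
\[\dim (S/(I_1\cap I_2))_d-\dim (S/(I_1\cap I_2))_{d-4}=\dim(\C[x_0]/(x_0^{d-1}))_{d-4}-\dim(\C[x_0]/(x_0^{d-1}))_d=1.\]
For $k=1$ the pairing in Lemma~\ref{lemmakey} is $(S/I_1\cap I_2)_d\times (S/I_1\cap I_2)_{d-4}\to\C$, and a pairing whose source exceeds its target in dimension by $1$ has left kernel of dimension at least $1$. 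By Lemma~\ref{lemmakey} this gives $T_X\NL([\Pi_1]+\lambda[\Pi_2])\supsetneq T_X\NL([\Pi_1],[\Pi_2])$ for \emph{every} $\lambda\in\Q^*$.

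It remains to pin the reduced locus, and this is where the work distinguishing $\lambda=1$ from $\lambda\neq0,1$ lies; I expect this to be the main obstacle. The containment $\NL([\Pi_1],[\Pi_2])\subseteq\NL([\Pi_1]+\lambda[\Pi_2])_{\red}$ is automatic, so I must rule out any component $Z$ through $X$ of dimension exceeding $N:=\dim\NL([\Pi_1],[\Pi_2])$. A very general member $X'$ of such a $Z$ would have $\mathrm{NS}(X')\otimes\Q=\langle H,[\Pi_1]+\lambda[\Pi_2]\rangle$ of rank two, so that neither $[\Pi_1]$ nor $[\Pi_2]$ is individually Hodge on $X'$; I would then show this is impossible for $\lambda\neq1$. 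The mechanism is that the only extra deformations come from the plane-section relation $[\Pi_1]+[\Pi_2]=H|_X-[C]$, where $C=V(x_3,B)$ is the residual degree-$(d-2)$ curve cut on $X$ by the plane $V(x_3)$; this relation, and hence the extra $(N{+}1)$-dimensional family, exists precisely when $\lambda=1$. Making ``no extra component for $\lambda\neq1$'' rigorous---equivalently, that the integral class $p[\Pi_1]+q[\Pi_2]$ (with $\lambda=q/p$) does not move in a positive-dimensional family transverse to $\NL([\Pi_1],[\Pi_2])$---is the crux; Dan obtained it by a degeneration argument, and in the present framework one can instead analyze effectivity of a divisor in class $p[\Pi_1]+q[\Pi_2]$ via Riemann--Roch on $X'$, or show directly that the order-two obstruction to integrating the excess tangent vector is nonzero for $\lambda\neq0,1$ and vanishes for $\lambda=1$.

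Granting the reduced equality, the conclusion is immediate: near $X$ the reduced scheme $\NL([\Pi_1]+\lambda[\Pi_2])_{\red}=\NL([\Pi_1],[\Pi_2])$ is smooth of dimension $N=\dim T_X\NL([\Pi_1],[\Pi_2])$ by Proposition~\ref{prpSmooth}, whereas $\dim T_X\NL([\Pi_1]+\lambda[\Pi_2])\geq N+1$ by the tangent computation above. Hence $\NL([\Pi_1]+\lambda[\Pi_2])$ is nonreduced at $X$ for every $\lambda\in\Q\setminus\{0,1\}$.
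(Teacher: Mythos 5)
Your first half---the excess tangent dimension for every $\lambda\in\Q^*$---is correct and is essentially the paper's own computation: the same ideals $I_1,I_2$, the same identification $S/(I_1+I_2)\cong\C[x_0]/(x_0^{d-1})$, and the same inclusion--exclusion plus Gorenstein duality giving $\dim(S/(I_1\cap I_2))_d=\dim(S/(I_1\cap I_2))_{d-4}+1$, so that the pairing of Lemma~\ref{lemmakey} has a nonzero left kernel for every value of $\nu(\lambda)$ and hence $T_X\NL([\Pi_1]+\lambda[\Pi_2])\supsetneq T_X\NL([\Pi_1],[\Pi_2])$.

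The second half has a genuine gap, and you flag it yourself: the equality $\NL([\Pi_1]+\lambda[\Pi_2])_{\red}=\NL([\Pi_1],[\Pi_2])$ for $\lambda\neq 0,1$ is the crux, and you only list candidate strategies (a degeneration argument, effectivity via Riemann--Roch on a very general member, a second-order obstruction computation) without carrying any of them out, so the proposition is not actually proved. The paper closes this step by quoting Voisin's classification of the small-codimension components of the Noether--Lefschetz locus for surfaces \cite{VoiCon}: any component of codimension below $2d-6$ parametrizes surfaces containing a line or a conic, so if the reduced locus through $X$ had codimension smaller than $2d-6=\codim\NL([\Pi_1],[\Pi_2])$, the class $([\Pi_1]+\lambda[\Pi_2])_{\prim}$ would have to be a multiple of a line or conic class, which happens only for $\lambda\in\{0,1\}$. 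For all other $\lambda$ every component through $X$ has codimension at least $2d-6$, and since it contains the smooth locus $\NL([\Pi_1],[\Pi_2])$ of that exact codimension (Proposition~\ref{prpSmooth}), the reduced loci coincide; the tangent space has codimension at most $2d-7$ by your first half, so the scheme is nonreduced. Without this input (or Dan's original degeneration argument from \cite{DanNR}) your argument stops short; note in particular that your proposed obstruction computation would have to explain why the obstruction vanishes identically along the larger family present at $\lambda=1$ yet is nonzero for all other $\lambda$, which is not obviously easier than invoking \cite{VoiCon}.
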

\begin{proof}
In this case we may assume that $\Pi_1=V(x_0,x_3)$ and $\Pi_2=V(x_1,x_3)$. Then
$f=x_0x_1g+x_3h$, with $g\in S_{d-2}, h\in S_{d-1}$.
Since $f$ is smooth we have that $h(0,0,x_2,0)$ is a nonzero, i.e., $\alpha x_2^{d-2}$ for some $\alpha\in \C^*$.
In particular, we have 
\[I_1=\langle x_0,x_3,x_1g,h\rangle, \; I_2=\langle x_1,x_2,x_0g,h\rangle, \; I_1+I_2=\langle x_0,x_1,x_3,x_2^{d-1}\rangle.\]
From this it follows that 
\[ h_{I_1\cap I_2}(d-4)=h_{I_1}(d-4)+h_{I_2}(d-4)-h_{I_1}(d-4)=2d-7\]
 and $h_{I_1\cap I_2}(d)=2d-6$. In particular,  the pairing 
\[ S/(I_1\cap I_2)_{d}\times S/(I_1\cap I_2)_{d-4} \to \C\]
has a left factor which larger dimension then the right factor. Hence this pairing has  a nonzero left-kernel.
In particular $T_X\NL([\Pi_1]+\lambda[\Pi_2])/T_X\NL([\Pi_1],[\Pi_2])$ is nonzero for all $\lambda$. Since $\codim T_X\NL([\Pi_1],[\Pi_2])=h_{I_1\cap I_2}(d)=2d-6$ we have that the tangent space to $\NL([\Pi_1]+\lambda[\Pi_2])$ has codimension at most $2d-5$. 

Suppose now that $\NL([\Pi_1]+\lambda [\Pi_2])$ would have codimension less than $2d-5$ then by \cite{VoiCon} the class $([\Pi_1]+\lambda [\Pi_2])_{\prim}$ is a multiple of the primitive part of the class of a line or of a conic. This is only the case if $\lambda\in \{0,1\}$.
For the other values of $\lambda$ we have $\codim \NL([\Pi_1]+\lambda[\Pi_2])\geq 2d-6$, i.e., $\NL([\Pi_1]+\lambda [\Pi_2])_{\red}=\NL([\Pi_1],[\Pi_2])$.
\end{proof}

\section{Case $k\geq 2$}\label{secResultB}
We will now consider the case $k\geq 2$. We will show then for most choices of $d$ there is no excess tangent dimension for a general $X$ on $\NL(\Pi_1,\Pi_2)$. 
\begin{remark}
In \cite{RNKMov}  we noted that for $(c,d)\in\{(2,4),(3,3)\}$ and for almost all $\lambda\in \Q$ the locus $\NL([\Pi_1]+\lambda[\Pi_2])$ is reducible in the neighorbood of a so-called split hypersurface. We expect that for $c=1$ and $d\geq 5$ that the locus is  $\NL([\Pi_1]+\lambda[\Pi_2])$ contains an embedded component in the neighborhood of $X$ of split type. In the cases $d=3,4,c>1$ we used that in low dimension there is $\NL(\Pi_1,\Pi_2)$ is a proper subscheme of $\NL(\Pi_1+\lambda\Pi_2)_{\red}$ of positive codimension to show reducibility. In the case $c=1,d\geq 5,k=1$ we only have a strict inclusion on the level of tangent spaces, suggesting that there is a non-reduced component whose underlying subscheme is containg in $\NL([\Pi_1],[\Pi_2])$.
\end{remark}

\begin{proposition}\label{prpLowDeg} Suppose that $3\leq d\leq 5$. Moreover,  suppose that $k>4$ if $d=3$, that $k>2$ if $d=4,5$.

Then there exists a hypersurface $X\subset \Ps^{2k+1}$ of degree $d$ containing two $k$-planes $\Pi_1,\Pi_2$ such that $\dim \Pi_1\cap \Pi_2=k-1$, and such that for all but finitely many $\lambda$ we have $\NL([\Pi_1]+\lambda[\Pi_2])=\NL([\Pi_1],[\Pi_2])$.
\end{proposition}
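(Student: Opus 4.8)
The plan is to apply Theorem~\ref{thmTsp} to a carefully chosen hypersurface $X$ containing two $k$-planes $\Pi_1,\Pi_2$ meeting in codimension one. The goal is to exhibit $X$ for which the left kernel of the multiplication map
\[ (I_1+I_2/I_2)_d\times (I_1+I_2/I_2)_{kd-2k-2}\to (S/I_2)_{(k+1)(d-2)}\]
vanishes, since Theorem~\ref{thmTsp} then yields $\codim T_X\NL([\Pi_1]+\lambda[\Pi_2])=\codim T_X\NL([\Pi_1],[\Pi_2])$ for all but finitely many $\lambda$, which together with the smoothness of $\NL([\Pi_1],[\Pi_2])$ (Proposition~\ref{prpSmooth}) and Lemma~\ref{lemCodimNL} forces the scheme-theoretic equality of the two loci near $X$.

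First I would set up explicit coordinates. Since $\dim\Pi_1\cap\Pi_2=k-1$, I may take $\Pi_1=V(x_0,x_{k+2},\dots,x_{2k+1})$ and $\Pi_2=V(x_1,x_{k+2},\dots,x_{2k+1})$, so that $\Pi_1$ and $\Pi_2$ share the hyperplane-section forms $x_{k+2},\dots,x_{2k+1}$ and differ only in $x_0$ versus $x_1$. A hypersurface containing both has the shape $f=x_0x_1 g+\sum_{j=k+2}^{2k+1}x_j h_j$ for suitable forms $g,h_j$; I would choose these (for instance a Fermat-type or otherwise sufficiently generic choice subject to the smoothness and regular-sequence conditions) so that the associated ideals $I_1,I_2$ of Construction~\ref{conIdeal} take the clean monomial-plus-generator form computed in the example after that construction. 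The key numerical input is the constraint $d\leq kd-2k-2$, i.e. $(k-1)d\geq 2k+2$, which is exactly why the hypotheses restrict to $k>4$ when $d=3$ and $k>2$ when $d=4,5$; this inequality guarantees that the right-hand factor $(I_1+I_2/I_2)_{kd-2k-2}$ is at least as large as the left-hand one, so that injectivity of the induced multiplication is numerically plausible rather than impossible for dimension reasons.

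The main computation, and the step I expect to be the principal obstacle, is to verify that the multiplication pairing has trivial left kernel for the chosen $X$. Concretely, $I_1+I_2$ will be generated by $x_0,x_1,x_{k+2},\dots,x_{2k+1}$ together with a few higher-degree forms coming from $g$ and the $h_j$, and $(I_1+I_2)/I_2$ is governed by the image of $x_0$ modulo $I_2$. I would identify $(S/I_2)$ with the Artinian Gorenstein quotient of socle degree $(d-2)(k+1)$ (Remark~\ref{rmkSocDeg}) and translate the vanishing of the left kernel into the statement that no nonzero element of $(I_1+I_2/I_2)_d$ annihilates the whole of $(I_1+I_2/I_2)_{kd-2k-2}$ under multiplication into the socle. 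Because $S/I_2$ is Gorenstein, the socle pairing on $S/I_2$ itself is perfect, so the obstruction is precisely whether elements of the subspace $(I_1+I_2/I_2)$ can pair to zero against that subspace; this reduces to a concrete linear-algebra or Macaulay-inverse-system computation in the coordinates above.

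Finally, I would record the two standard consequences. Granting the vanishing of the left kernel, Theorem~\ref{thmTsp} gives equality of codimensions of tangent spaces for cofinitely many $\lambda$; combined with $T_X\NL([\Pi_1],[\Pi_2])\subseteq T_X\NL([\Pi_1]+\lambda[\Pi_2])$ from Remark~\ref{rmkIns} and Lemma~\ref{lemmakey}, equality of dimensions forces equality of tangent spaces, hence $\NL([\Pi_1]+\lambda[\Pi_2])=\NL([\Pi_1],[\Pi_2])$ in a neighborhood of $X$ by the smoothness in Proposition~\ref{prpSmooth}. I would present the degree-by-degree verification uniformly across $d=3,4,5$ under the stated bounds on $k$, noting that the excluded small values of $k$ are exactly those where $(k-1)d<2k+2$ and Theorem~\ref{thmTsp} cannot be invoked.
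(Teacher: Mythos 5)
Your overall strategy --- take $\Pi_1=V(x_0,x_{k+2},\dots,x_{2k+1})$ and $\Pi_2=V(x_1,x_{k+2},\dots,x_{2k+1})$, write $f=x_0x_1g+\sum_j x_jh_j$, verify that the relevant multiplication pairing has trivial left kernel, and then invoke Theorem~\ref{thmTsp} together with Proposition~\ref{prpSmooth} and the inclusion of tangent spaces --- is exactly the strategy of the paper, and your numerology is right: $d\leq kd-2k-2$ is precisely equivalent to the bounds $k>4$ for $d=3$ and $k>2$ for $d=4,5$. The final deduction (equal tangent-space codimensions plus smoothness and containment of $\NL([\Pi_1],[\Pi_2])$ force equality of the loci) is also correct. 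However, there is a genuine gap: the entire content of the proposition is the verification that the left kernel vanishes for some specific $X$, and you leave both the choice of $X$ (``Fermat-type or otherwise sufficiently generic'') and the verification itself (``a concrete linear-algebra or Macaulay-inverse-system computation'') unexecuted, while acknowledging this is the principal obstacle. Genericity is not a substitute here: one must actually exhibit a hypersurface for which the kernel vanishes, since the paper's own discussion shows that for special $X$ on $\NL([\Pi_1],[\Pi_2])$ there \emph{is} excess tangent dimension, so the claim is not automatic and must be checked on a concrete $f$.

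There is a second, structural issue: you propose to run the verification ``uniformly across $d=3,4,5$ under the stated bounds on $k$'', i.e., for infinitely many $k$ at once, and you give no mechanism for doing so. The paper avoids this by induction on $k$: it checks the left-kernel condition only at the minimal values ($k=5$ for $d=3$, $k=3$ for $d=4,5$) for explicit equations such as
\[ f=x_0x_1\Bigl(\sum_{i=0}^4 x_i^{d-2}\Bigr)+\sum_{j=5}^7 x_j\bigl(x_{j-3}^{d-1}+x_j^{d-1}\bigr),\]
whose associated ideals $I_1$ and $I_1+I_2$ are computable by hand or computer, and then obtains all larger $k$ from the induction step of \cite[Proposition 4.7]{RNKMov} combined with \cite[Lemma 2.10]{RNKMov}. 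If you want to bypass that induction you would need a uniform combinatorial description of the Gram matrix valid for all $k$ (in the spirit of what the paper does for $d\geq 6$ in Lemma~\ref{lemGram}), which is a substantial piece of work that your proposal does not supply. As written, the proposal is a correct plan with its two essential ingredients --- the explicit base-case computation and the passage from small $k$ to all $k$ --- missing.
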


\begin{proof}
We prove  by induction on $k$ that there is an $X$ on $\NL([\Pi_1],[\Pi_2])$ such that 
\[ T_X\NL([\Pi_1]+\lambda[\Pi_2])=T_X \NL([\Pi_1],[\Pi_2])\]
for all but finitely many $\lambda$. Since $\NL([\Pi_1],[\Pi_2])$ is smooth (Proposition~\ref{prpSmooth}) and $\NL([\Pi_1],[\Pi_2])\subset \NL([\Pi_1]+\lambda[\Pi_2])$ this suffices.

We take $X=V(f)$, where for $d\in \{4,5\},k=3$ we take
\[ f=x_0x_1\left(\sum_{i=0}^4 x_i^{d-2}\right)+\sum_{j=5}^7 x_j(x_{j-3}^{d-1}+x_j^{d-1}).\]
and for $d=3,k=5$ we take
\[ f=x_0x_1\left(\sum_{i=0}^6 x_i\right)+\sum_{j=7}^{11} x_j(x_{j-5}^{d-1}+x_j^{d-1}).\]
Then $\Pi_1=V(x_0,x_{k+2},\dots,x_{2k+1})$ and $\Pi_2=V(x_1,x_{k+2},\dots, x_{2k+1})$. 

One easily determines $I_1$ and $I_1+I_2$ in these cases and  checks by hand or by computer that the pairing
\[ (I_1+I_2/I_1)_d\times (I_1+I_2/I_1)_{kd-2k-2}\to (S/I)_{(k+1)(d-2)}\]
has no left kernel. Then Theorem~\ref{thmTsp} concludes the base case of the induction.
The induction step is \cite[Proposition 4.7]{RNKMov} combined with \cite[Lemma 2.10]{RNKMov}.
\end{proof}

To handle the case $c=1$, $d\geq 6, k\geq 2$ we have to slightly alter our approach.

For the rest of this section fix $k,d$ and let $X_{k,d}$ be the zero set of
\[ x_0x_1(x_0^{d-2}+x_1^{d-2}+x_2^{d-4}x_3^2)+x_4x_2^{d-1}+x_4^d+x_5x_3^{d-1}+x_5^d+\sum_{j=3}^k x_{2j+1}^d+x_{2j+1}x_{2j}^{d-1}\]
in $\Ps^{2k+1}$.
Then $X_{k,d}$ contains two $k$-planes
\[\Pi_1=V(x_0,x_4,x_5,x_7,x_9,x_{11},\dots,x_{2k+1})\]
and
\[\Pi_2=V(x_0,x_4,x_5,x_7,x_9,x_{11},\dots,x_{2k+1}).\]
Let $I_j$ be the ideal associated with the Hodge class $[\Pi_j]$ on $X_{k.d}$.

\begin{lemma} The set
\[B_1:= \{x_1^{a_1}x_2^{a_2}x_3^{a_3}\prod_{j=3}^k x_{2j}^{a_{2j}}\colon 0\leq a_i\leq d-2 \mbox{ for all } i\}\]
is a basis for $S/I_1$. Similarly, the set
\[ B_2:= \{x_0^{a_0}x_2^{a_2}x_3^{a_3}\prod_{j=3}^k x_{2j}^{a_{2j}}\colon 0\leq a_i\leq d-2 \mbox{ for all } i\}\]
is a basis for $S/I_2$.
\end{lemma}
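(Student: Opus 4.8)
The engine is the Example in Section~\ref{secPrelim}: once I exhibit $f$ as $\sum_{i=0}^{k}\ell_i g_i$ with $\Pi_1=V(\ell_0,\dots,\ell_k)$ and $\ell_0,\dots,\ell_k,g_0,\dots,g_k$ a regular sequence, the associated ideal is read off as $I_1=\langle \ell_0,\dots,\ell_k,g_0,\dots,g_k\rangle$. First I would group the defining polynomial block by block according to the generators of $\Pi_1$. Taking $\ell_0=x_0,\ell_1=x_4,\ell_2=x_5$ and $\ell_j=x_{2j+1}$ for $3\le j\le k$, the natural partners are
\[ g_0=x_1\bigl(x_0^{d-2}+x_1^{d-2}+x_2^{d-4}x_3^2\bigr),\quad g_1=x_2^{d-1}+x_4^{d-1},\quad g_2=x_3^{d-1}+x_5^{d-1},\]
and $g_j=x_{2j+1}^{d-1}+x_{2j}^{d-1}$ for $3\le j\le k$, each homogeneous of degree $d-1$; expanding $\sum_i \ell_i g_i$ recovers $f$ term by term.

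To invoke the Example I must check the regular sequence condition, which simultaneously yields the dimension I will need. I would pass to $R:=S/\langle\ell_0,\dots,\ell_k\rangle$, a polynomial ring in the $k+1$ variables $x_1,x_2,x_3,x_6,x_8,\dots,x_{2k}$, where the images become $\bar g_1=x_2^{d-1}$, $\bar g_2=x_3^{d-1}$, $\bar g_j=x_{2j}^{d-1}$ and $\bar g_0=x_1^{d-1}+x_1x_2^{d-4}x_3^2$. Their only common zero is the origin, since the pure powers force $x_2=x_3=x_{2j}=0$ and then $\bar g_0$ forces $x_1=0$. Hence $\bar g_0,\dots,\bar g_k$ is a regular sequence in the Cohen--Macaulay ring $R$; lifting through the linear forms shows $\ell_0,\dots,\ell_k,g_0,\dots,g_k$ is a regular sequence in $S$, so the Example gives $I_1$ and exhibits $S/I_1\cong R/\langle\bar g_0,\dots,\bar g_k\rangle$ as a complete intersection with $\dim_\C S/I_1=(d-1)^{k+1}=\# B_1$.

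Because the cardinality of $B_1$ already matches the dimension, it suffices to prove that $B_1$ spans $S/I_1$, and this reduction step is where the real content lies. The relations $x_2^{d-1}=x_3^{d-1}=x_{2j}^{d-1}=0$ annihilate any monomial in which one of these exponents reaches $d-1$, so the only obstruction is bringing the exponent of $x_1$ into the range $\{0,\dots,d-2\}$. Here I would use $x_1^{d-1}=-x_1x_2^{d-4}x_3^2$ to rewrite a monomial with $x_1$-exponent $a_1\ge d-1$ as $-x_1^{a_1-(d-2)}x_2^{a_2+d-4}x_3^{a_3+2}\cdots$, which either vanishes (once the new $x_2$- or $x_3$-exponent hits $d-1$) or has strictly smaller $x_1$-degree. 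The point to get right is termination: the $x_1$-degree drops by $d-2\ge1$ at every application, so after finitely many steps every surviving monomial lies in $B_1$. Thus $B_1$ spans, and by the dimension count it is a basis.

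Finally, the claim for $B_2$ needs no separate computation. The polynomial $f$ is invariant under the involution $x_0\leftrightarrow x_1$ of $S$, since the inner factor $x_0^{d-2}+x_1^{d-2}+x_2^{d-4}x_3^2$ is symmetric and no remaining term involves $x_0$ or $x_1$; this involution interchanges $\Pi_1$ with $\Pi_2=V(x_1,x_4,x_5,x_7,\dots,x_{2k+1})$, hence carries the Hodge class $[\Pi_1]$ to $[\Pi_2]$, the ideal $I_1$ to $I_2$, and the monomial set $B_1$ to $B_2$. The second assertion therefore follows from the first.
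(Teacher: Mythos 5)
Your proposal is correct and follows essentially the same route as the paper: you identify $I_1=\langle \ell_0,\dots,\ell_k,g_0,\dots,g_k\rangle$ via the Example of Section~\ref{secPrelim} (so that $S/I_1$ is the Artinian complete intersection cut out by $x_0,x_4,x_5,x_7,\dots,x_{2k+1}$ together with $x_1^{d-1}+x_1x_2^{d-4}x_3^2,\,x_2^{d-1},\,x_3^{d-1},\,x_6^{d-1},\dots,x_{2k}^{d-1}$, exactly the generator list the paper asserts) and then read off the monomial basis. The only difference is that you supply the details the paper leaves implicit --- the regular-sequence check, the count $(d-1)^{k+1}=\#B_1$, and the terminating rewriting argument for spanning --- and replace ``follows similarly'' for $B_2$ by the $x_0\leftrightarrow x_1$ symmetry, all of which is fine.
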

\begin{proof}
The first statement follows from the fact that $I_1$ is generated by
\[ x_0,x_1^{d-1}+x_1x_2^{d-4}x_3^2,x_2^{d-1},x_3^{d-1},x_4,x_5,x_6^{d-1},x_7,x_8^{d-1},x_9,\dots,x_{2k}^{d-1},x_{2k+1}.\]
The second statement follows similarly.
\end{proof}

\begin{lemma} The set $B_1\cup B_2$ is a basis for $S/I_1\cap I_2$.
\end{lemma}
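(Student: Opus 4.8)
The plan is to compare $S/(I_1\cap I_2)$ with $S/I_1$ and $S/I_2$ through the standard exact sequence
\[ 0 \to S/(I_1\cap I_2) \stackrel{\iota}{\to} S/I_1 \oplus S/I_2 \to S/(I_1+I_2)\to 0,\]
where $\iota$ sends a class to the pair of its reductions modulo $I_1$ and $I_2$, and the last map is the difference of the projections. Since $\iota$ is injective, it suffices to show that $\iota(B_1\cup B_2)$ is a basis of the image of $\iota$. Writing $A:=B_1\cap B_2$ for the monomials in $x_2,x_3,x_6,x_8,\dots,x_{2k}$ (those involving neither $x_0$ nor $x_1$), we have the disjoint decomposition $B_1\cup B_2=(B_1\setminus A)\sqcup(B_2\setminus A)\sqcup A$.

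First I would record the two facts that make the reductions of the basis monomials transparent, both visible in the generating sets of the preceding lemma: $x_0\in I_1$ and $x_1\in I_2$. Hence every monomial divisible by $x_0$ vanishes in $S/I_1$ and every monomial divisible by $x_1$ vanishes in $S/I_2$. Reading each monomial as an element of the respective bases $B_1$ of $S/I_1$ and $B_2$ of $S/I_2$, this gives $\iota(b)=(b,0)$ for $b\in B_1\setminus A$, $\iota(b)=(0,b)$ for $b\in B_2\setminus A$, and $\iota(b)=(b,b)$ for $b\in A$. Linear independence is then immediate: a relation $\sum_m c_m\iota(m)=0$ has first component $\sum_{m\in B_1}c_m m=0$ in $S/I_1$, forcing $c_m=0$ on $B_1$ since $B_1$ is a basis, and second component $\sum_{m\in B_2}c_m m=0$ in $S/I_2$, forcing $c_m=0$ on $B_2$.

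For spanning I would take $f\in S$, let $v_1=\sum_{b\in B_1}\alpha_b b$ and $v_2=\sum_{b\in B_2}\beta_b b$ be its $B_1$- and $B_2$-normal forms, and build $g$ from the $B_1\setminus A$-part of $v_1$, the $B_2\setminus A$-part of $v_2$, and a common $A$-part. The computations above give $\iota(g)=(v_1,v_2)=\iota(f)$, hence $f\equiv g$ modulo $I_1\cap I_2$, provided the $A$-coefficients of $v_1$ and $v_2$ agree. The only genuine step is to verify this agreement, and this is where I identify $I_1+I_2$. Since $x_0\in I_1$ and $x_1\in I_2$, the twisted generators $x_1(x_1^{d-2}+x_2^{d-4}x_3^2)\in I_1$ and $x_0(x_0^{d-2}+x_2^{d-4}x_3^2)\in I_2$ become redundant in the sum, so
\[ I_1+I_2=\langle x_0,x_1,x_2^{d-1},x_3^{d-1},x_4,x_5,x_6^{d-1},x_7,x_8^{d-1},x_9,\dots,x_{2k}^{d-1},x_{2k+1}\rangle,\]
and therefore $S/(I_1+I_2)\cong\C[x_2,x_3,x_6,x_8,\dots,x_{2k}]/(x_2^{d-1},x_3^{d-1},x_6^{d-1},\dots,x_{2k}^{d-1})$, for which $A$ is exactly the monomial basis.

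To finish, since $v_1\equiv f\equiv v_2$ modulo $I_1+I_2$ and, modulo $I_1+I_2$, the terms of $v_1$ divisible by $x_1$ and of $v_2$ divisible by $x_0$ vanish, both $v_1$ and $v_2$ reduce to their $A$-parts; independence of $A$ in $S/(I_1+I_2)$ then forces $\alpha_b=\beta_b$ for $b\in A$, which is precisely the compatibility needed above. Thus $B_1\cup B_2$ both spans and is independent, hence is a basis. I expect the identification of $I_1+I_2$, and with it that $A$ is a basis of $S/(I_1+I_2)$, to be the only point requiring care; everything else is formal bookkeeping with the exact sequence. As a consistency check the cardinalities match in each degree, $|B_1\cup B_2|=|B_1|+|B_2|-|A|=\dim S/I_1+\dim S/I_2-\dim S/(I_1+I_2)$.
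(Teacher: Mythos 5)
Your proof is correct and follows essentially the same route as the paper: both rest on the exact sequence $0\to S/(I_1\cap I_2)\to S/I_1\oplus S/I_2\to S/(I_1+I_2)\to 0$ and on identifying $B_1\cap B_2$ as a monomial basis of $S/(I_1+I_2)$. The only difference is that you verify linear independence and spanning explicitly, whereas the paper concludes from the cardinality count $\#(B_1\cup B_2)=\dim S/(I_1\cap I_2)$ together with the asserted generation by $B_1\cup B_2$; your spanning argument (matching the $A$-parts of the two normal forms modulo $I_1+I_2$) in fact supplies the detail behind that assertion.
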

\begin{proof}
One easily checks that $B_1\cap B_2$ is a basis for $S/I_1+I_2$. Hence
\begin{eqnarray*}\# B_1\cup B_2&=&\#B_1+\# B_2-\# B_1\cap B_2\\&=&\dim S/I_1+\dim S/I_2-\dim S/I_1+I_2=\dim S/I_1\cap I_2.\end{eqnarray*}
Moreover, $B_1\cup B_2$ generates $S/I_1\cap I_2$, hence it is a basis.
\end{proof}

Let $M_1=x_1^{d-2}x_2^{d-2}x_3^{d-2}\prod_{i=3}^k x_{2i}^{d-2}$, let $M_2=x_0^{d-2}x_2^{d-2}x_3^{d-2}\prod_{i=3}^k x_{2i}^{d-2}$. Then for $j=1,2$ the monomial $M_j$ is a basis for the one-dimensional vector space $(R/I_j)_{(k+1)(d-2)}$.

 \begin{lemma} Let $j\in \{1,2\}$. Let $N=x_{2-j}^{a_{2-j}}x_2^{a_2}x_3^{a_3}\prod_{i=3}^k x_{2i}^{a_{2i}}\in B_j$ be a monomial of degree $t$. If $a_j<d-2$, $a_2>2$ or $a_3>d-4$ then there is a unique monomial $N'\in B_j$ of degree $(k+1)(d-2)-t$ such that $\varphi_j(N,N')\neq 0$. Moreover, $N'=M_j/N$.
If $a_{2-j}=d-2$, $a_2\leq 2$ and $a_3\leq d-4$ then there exists two such monomials namely $N'=M_j/N$ and $N''=M_j/N x_{2-j}^{d-2}/x_2^{d-4}x_3^2$.
\end{lemma}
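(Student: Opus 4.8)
The plan is to reduce the computation of $\varphi_j$ to an explicit normal-form reduction in the graded Artinian Gorenstein algebra $S/I_j$, using the Gröbner basis one reads off from the generators of $I_j$ recorded in the previous lemma. I treat $j=1$; the case $j=2$ is identical after interchanging $x_0$ and $x_1$, since $X_{k,d}$ and hence the pair $(I_1,I_2)$ is symmetric under this swap (the analogue of the relation being $x_0^{d-1}=-x_0x_2^{d-4}x_3^2$). Modulo $I_1$ every variable except $x_1,x_2,x_3,x_6,x_8,\dots,x_{2k}$ dies, and on the survivors the only relations are $x_2^{d-1}=x_3^{d-1}=x_6^{d-1}=\dots=x_{2k}^{d-1}=0$ together with the single non-monomial relation $x_1^{d-1}=-x_1x_2^{d-4}x_3^2$. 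First I would record the tensor decomposition $S/I_1\cong B\otimes_\C C$, where $B=\C[x_1,x_2,x_3]/(x_1^{d-1}+x_1x_2^{d-4}x_3^2,\,x_2^{d-1},\,x_3^{d-1})$ carries all the interesting behaviour and $C=\C[x_6,x_8,\dots,x_{2k}]/(x_6^{d-1},\dots,x_{2k}^{d-1})$ is a tensor product of truncated polynomial rings. Writing $\varphi_1$ for the multiplication pairing followed by $\sigma_1$ (whether $\varphi_1(N,N')\ne 0$ depends only on the image of $NN'$ in the one-dimensional socle, hence not on $\sigma_1$), and noting $M_1=M_B\cdot M_C$ with $M_B=x_1^{d-2}x_2^{d-2}x_3^{d-2}$ and $M_C=\prod_{i\ge3}x_{2i}^{d-2}$, the pairing splits as $\varphi_1=\varphi_B\otimes\varphi_C$ by the standard bidegree argument for tensor products of graded Gorenstein algebras. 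In $C$ the pairing is diagonal: the only monomial pairing nontrivially with $N_C$ is $M_C/N_C$. Hence $\varphi_1(N,N')\ne0$ forces $N'_C=M_C/N_C$, reducing the statement to the analogous claim for $\varphi_B$ on the three-variable factor $B$.

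For $B$ I would verify, via Buchberger's criterion on the three $S$-polynomials, that $\{x_1^{d-1}+x_1x_2^{d-4}x_3^2,\,x_2^{d-1},\,x_3^{d-1}\}$ is a Gröbner basis for the lex order $x_1>x_2>x_3$ (the leading terms being $x_1^{d-1},x_2^{d-1},x_3^{d-1}$), so the standard monomials are exactly $\{x_1^{a_1}x_2^{a_2}x_3^{a_3}:0\le a_i\le d-2\}$ and the socle is spanned by $M_B$. The heart of the proof is then a reduction-to-normal-form calculation. Fix $N_B=x_1^{a_1}x_2^{a_2}x_3^{a_3}$ and consider a candidate $N'_B=x_1^{b_1}x_2^{b_2}x_3^{b_3}$ of complementary degree, so $P:=N_BN'_B=x_1^{A_1}x_2^{A_2}x_3^{A_3}$ with $A_i=a_i+b_i$ and $A_1+A_2+A_3=3(d-2)$. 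I would split on the sizes of the $A_i$: if $A_2\ge d-1$ or $A_3\ge d-1$ then $P$ reduces to $0$; if all $A_i\le d-2$ then the degree constraint forces $A_1=A_2=A_3=d-2$, giving $P=M_B$ with coefficient $1$ and $N'_B=M_B/N_B$; and in the remaining case $A_1\ge d-1$, $A_2,A_3\le d-2$, a single application of $x_1^{d-1}=-x_1x_2^{d-4}x_3^2$ sends $P$ to $-x_1^{A_1-(d-2)}x_2^{A_2+(d-4)}x_3^{A_3+2}$. Here I would check that $A_1-(d-2)\le d-2$ always, since $A_1\le 2(d-2)$, so no second reduction occurs, and that this monomial is either killed (when $A_2>2$ or $A_3>d-4$, as then its $x_2$- or $x_3$-exponent reaches $d-1$) or equals $-M_B$ precisely when $A_1=2(d-2)$, $A_2=2$, $A_3=d-4$.

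Translating these numerical conditions back to $N_B$ completes the argument. The first case always yields the solution $N'_B=M_B/N_B$ with $\varphi_B=1$, so $N'=M_1/N$ after restoring the forced $C$-factor $M_C/N_C$. The second case yields an additional solution exactly when $A_1=2(d-2)$ (forcing $a_1=b_1=d-2$) and $A_2=2$, $A_3=d-4$ are solvable in $b_2,b_3\ge 0$, that is, precisely when $a_{2-j}=a_1=d-2$, $a_2\le 2$ and $a_3\le d-4$; the extra monomial is $x_1^{d-2}x_2^{2-a_2}x_3^{d-4-a_3}$, which after reinserting $M_C/N_C$ equals $\frac{M_1}{N}\cdot\frac{x_1^{d-2}}{x_2^{d-4}x_3^{2}}$, matching $N''$ in the statement. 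The two solutions are genuinely distinct because their $x_1$-exponents are $0$ and $d-2$ and $d\ge 6$, and the case analysis shows there can be no third. I expect the main obstacle to be the bookkeeping in the third case: confirming that the $x_1$-relation is applied exactly once and tracking cleanly how it inflates the $x_2$- and $x_3$-exponents, since this is what simultaneously forces uniqueness in the generic case and pins down the exact shape of $N''$.
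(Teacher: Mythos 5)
Your proof is correct and follows essentially the same route as the paper's: reduce to $j=1$ by symmetry, pair $N$ against a candidate standard monomial $N'$, and split on whether the $x_1$-exponents sum to at most $d-2$ (forcing $N'=M_1/N$ by the degree count) or exceed it (where a single application of $x_1^{d-1}\equiv -x_1x_2^{d-4}x_3^2$ pins down the exceptional solution exactly when $a_1=d-2$, $a_2\le 2$, $a_3\le d-4$). The tensor factorization through $B\otimes C$ and the explicit Gr\"obner-basis check are just a cleaner packaging of what the paper does in one pass over all variables.
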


\begin{proof}Using symmetry, it suffices to prove the statement for $j=1$.

Take integers  $b_i\leq d-2$ for $i=1,2,3,6,8,10,\dots,2k$ such that $\sum b_i=(k+1)(d-2)-t$ and consider the product
\[  \left(x_1^{a_1}x_2^{a_2}x_3^{a+3}\prod_{i=3}^k x_{2i}^{a_{2i}}\right)\left(x_1^{b_1}x_2^{b_2}x_3^{b_3}\prod_{i=3}^k x_{2i}^{b_{2i}}\right)\]
If $a_1+b_1\leq d-2$ then this monomial is nonzero if and only if $a_i+b_i\leq d-2$ for all $i$.
Since $\sum a_i+b_i= (k+1)(d-2)$ this happens if and only if $\sum a_i+b_i=d-2$ for all $i$, i.e., when
\[ x_1^{b_1}x_2^{b_2}x_3^{b_3}\prod_{i=3}^k x_{2i}^{b_{2i}}=\frac{M_1}{ x_1^{a_1}x_2^{a_2}x_3^{a+3}\prod_{i=3}^k x_{2i}^{a_{2i}}}.\]

If $a_1+b_1\geq d-2$ then $a_1+b_1\leq 2(d-2)$ and we find
\[  x_1^{a_1}x_2^{a_2}x_3^{a+3}\prod_{j=3}^k x_{2j}^{a_{2j}}x_1^{b_1}x_2^{b_2}x_3^{b_3}\prod_{j=3}^k x_{2j}^{b_{2j}}\] is equivalent to \[ -x_1^{a_1+b_1-d+2}x_2^{a_2+b_2+d-4}x_3^{a_3+b_3+2}  \prod_{j=3}^k x_{2j}^{a_{2j}+b_{2j}}\] moduli $I_1$.
The latter is nonzero in $(S/I_1\cap I_2)_{(k+1)(d-2)}$ if and only if $a_1=b_1=(d-2), a_2+b_2=2, a_3+b_3=d-4, a_{2j}+b_{2j}=d-2$.
\end{proof}
\begin{remark}
If $t=d$ then $a_1=d-2$ implies $a_2\leq 2, a_3\leq d-4$. (Here we use $d\geq 6$.)
\end{remark}

\begin{lemma}\label{lemGram} Suppose $\nu(\lambda)\neq 0$. Let $\psi=\varphi_1+\nu(\lambda)\varphi_2$ restricted to
\[ (S/I_1\cap I_2)_d \times (S/I_1\cap I_2)_{kd-2k-2}\]
Let $C_1$ be subset of $B_1\cup B_2$   consisting of the mononmials of of degree $d$, let $C_2$ be the subset of $B_1\cup B_2$ consisting of monomials of degree $kd-2k-2$. Then up to permutation of elements of $C_1$ and $C_2$ we have that the Gram matrix of $\psi$ with respect to $C_1$ and $C_2$ has no zero rows and is a block matrix with blocks
\[ (1),\; \begin{pmatrix} \nu(\lambda)\end{pmatrix}, \; \begin{pmatrix}1 & \nu(\lambda)\end{pmatrix},\;\begin{pmatrix}
 0& 1 & \nu(\lambda)\\
 1& -1 &0\\
 \nu(\lambda) &0 &-\nu(\lambda)\\ \end{pmatrix}\]
 and each of these blocks occur.
 \end{lemma}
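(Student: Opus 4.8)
The plan is to read off $\psi$ entirely from the two monomial bases $B_1,B_2$ together with the preceding pairing lemma, and then to organize the nonzero entries of the Gram matrix into a bipartite graph whose connected components are exactly the four advertised blocks. First I would record two vanishing facts: since $x_0\in I_1$ and $x_1\in I_2$, the form $\varphi_1$ vanishes on any pair one of whose factors is divisible by $x_0$, and $\varphi_2$ vanishes on any pair one of whose factors is divisible by $x_1$. Calling a monomial of $B_1\cup B_2$ of \emph{type A} if it lies in $B_1\cap B_2$ (divisible by neither $x_0$ nor $x_1$), of \emph{type B} if it lies in $B_1\setminus B_2$ (carrying $x_1$), and of \emph{type C} if it lies in $B_2\setminus B_1$ (carrying $x_0$), this gives $\psi(m,m')=\varphi_1(m,m')$ as soon as a factor has type B, $\psi(m,m')=\nu(\lambda)\varphi_2(m,m')$ as soon as a factor has type C, and $\psi(m,m')=\varphi_1(m,m')+\nu(\lambda)\varphi_2(m,m')$ only when both factors have type A.

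Next I would form the bipartite graph $\Gamma$ with left vertices $C_1$, right vertices $C_2$, and an edge of weight $\psi(m,m')$ whenever this is nonzero. By the preceding pairing lemma, for fixed $m$ and fixed $j$ there are at most two $m'$ with $\varphi_j(m,m')\neq 0$: the partner $M_j/m$ always occurs with weight $1$, and a second partner, prescribed explicitly by that lemma, occurs with weight $-1$ exactly when the $x_j$-exponent of $m$ equals $d-2$ and its $x_2$- and $x_3$-exponents are at most $2$ and $d-4$. Together with the type dichotomy this forces every vertex of $\Gamma$ to have degree $1$ or $2$, so each connected component is a path or a cycle. The asymmetry between the two degrees is essential and is supplied by the remark after the pairing lemma: for a degree-$d$ row of type B with $x_1$-exponent $d-2$ the auxiliary inequalities hold automatically (here $d\ge 6$ is used), whereas a degree-$(kd-2k-2)$ column with $x_1$-exponent $d-2$ need not satisfy them and may be a leaf.

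I would then compute the components. A type-B row $m$ with $x_1$-exponent $<d-2$ is a leaf joined to $M_1/m$, which has $x_1$-exponent strictly between $0$ and $d-2$ and hence is itself a leaf; this isolated edge is the block $(1)$, and symmetrically type C gives $(\nu(\lambda))$. A type-A row $m=x_2^{a_2}x_3^{a_3}\prod_{i=3}^{k}x_{2i}^{a_{2i}}$ is joined to $M_1/m$ (weight $1$) and $M_2/m$ (weight $\nu(\lambda)$); when $a_2<d-4$ or $a_3<2$ both neighbours fail the second-partner test and are leaves, so the component is the block $(1\ \nu(\lambda))$. When instead $a_2\ge d-4$ and $a_3\ge 2$, I claim the component is a $6$-cycle on the three rows $m$, $x_1^{d-2}x_2^{a_2-d+4}x_3^{a_3-2}\prod_{i=3}^{k}x_{2i}^{a_{2i}}$, $x_0^{d-2}x_2^{a_2-d+4}x_3^{a_3-2}\prod_{i=3}^{k}x_{2i}^{a_{2i}}$ and the three columns $M_1/m$, $M_2/m$, $x_2^{2d-6-a_2}x_3^{d-a_3}\prod_{i=3}^{k}x_{2i}^{d-2-a_{2i}}$. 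One traces the second-partner map once around the cycle and checks, using $x_1^{d-1}\equiv -x_1x_2^{d-4}x_3^2$ and its $x_0$-analogue, that after six steps one returns to $M_2/m$; reading off the six weights then reproduces exactly the displayed $3\times 3$ block.

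Finally I would assemble the block decomposition, observe that every row meets at least one edge so there are no zero rows, and exhibit one monomial realizing each type to see that all four blocks occur. The step I expect to be the main obstacle is the closure of the hexagon: keeping the exponent bounds $0\le(\,\cdot\,)\le d-2$ valid at each of the six monomials, and tracking the signs produced by the two relations, is where an error is easiest, and it is precisely the closure identity that rules out longer cycles and the transposed block $\binom{1}{\nu(\lambda)}$. A secondary subtlety is the boundary range $kd-2k-2=d$, i.e.\ $(k,d)=(2,6)$, where $C_1$ and $C_2$ consist of the same monomials and several of the six hexagon vertices coincide as monomials while remaining distinct as vertices of $\Gamma$; there I would verify directly that the component is still a $3\times3$ block of the asserted shape, and treat the realization of the blocks in that extremal case by hand.
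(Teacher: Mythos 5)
Your argument is correct and is essentially the paper's own proof: both classify the monomials of $B_1\cup B_2$ by their $x_0$- and $x_1$-exponents, invoke the pairing lemma to list the at most two partners of each monomial (using the remark that $d\geq 6$ forces the second-partner condition automatically in degree $d$), and read off the four block types from the resulting incidences, your hexagon being exactly the paper's $3\times 3$ block. The bipartite-graph and connected-component packaging, and the explicit six-step closure check, are just a more systematic presentation of the same case analysis.
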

 \begin{proof}
 Let  $N\in C_1$ be a monomial of degree $d$. Then at least one of  $a_0,a_1$ is zero.

If $0<a_1<d-2$ then let $N'=M_1/N$. Then  $\psi(N,N'')=0$ for all  monomials $N''$ of degree $kd-2k-2$ different from $N'$ and $\psi(N''',N')=0$ for all  monomials $N'''$ of degree $d$ different from $N$. Moreover, $\psi(N,N'')\in \{1,\nu(\lambda)\}$.
Similarly if $0<a_0<d-2$ we can take $N'=M_2/N$ and have the same properties. This yields the two types of $1\times 1$-blocks.

If $a_0=a_1=0$ then $\psi(N,N')$ is nonzero if and only if $N'=M_1/N$ or $N'=M_2/N$. If $N'$ and $N''$ do not pair with another monomial $N''$ of degree $d$ then this yields a block of the third type.
If one of the $N'$ pairs nonzero with another monomial $N''$ of degree $d$ then $a_2\geq d-4$ and $a_3\geq  2$. In this case $N''=Nx_1^{d-2}/x_2^{d-4}x_3^2$ or $N''=Nx_0^{d-2}/x_2^{d-4}x_3^2$, i.e., the exponent of $x_0$ or of $x_1$ in $N''$ equals $d-2$.
If $N$ is a monomial of degree $d$ with $a_1=d-2$ then $\psi(N,N')$ is nonzero for $N'=M_1/N$ and $N'=M_1/N x_1^{d-2}/x_2^{d-4}x_3^2$. In this case $\psi(N,N')\in \{\pm 1\}$.
If $N$ is a monomial of degree $d$ with $a_0=d-2$ then $\psi(N,N')=\pm \nu(\lambda)$ is nonzero for $N'=M_2/N$ and $N'=M_2/N x_1^{d-2}/x_2^{d-4}x_3^2$. In this case $\psi(N,N')\in \{\pm \nu(\lambda)\}$.
These monomials yield a $3\times 3$-block
\[\begin{pmatrix}
 0& 1 & \nu(\lambda)\\
 1& -1 &0\\
 \nu(\lambda) &0 &-\nu(\lambda)\\
 \end{pmatrix}.
 \]
 \end{proof}

\begin{theorem}\label{thmHighDeg} Let $d\geq 6, k\geq 2$ be integers. Let $\lambda\in \Q\setminus \{0,1\}$. Then $\NL([\Pi_1]+\lambda [\Pi_2])=\NL([\Pi_1],[\Pi_2)]$ in a neighborhood of $X_{k,d}$.
\end{theorem}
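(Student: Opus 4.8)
The plan is to reduce the statement to a linear-algebra computation with the Gram matrix of Lemma~\ref{lemGram}. First, by Proposition~\ref{prpSmooth} the locus $\NL([\Pi_1],[\Pi_2])$ is smooth in a neighborhood of $X_{k,d}$, and it is always contained in $\NL([\Pi_1]+\lambda[\Pi_2])$; hence it suffices to prove that the two loci have the same tangent space at $X_{k,d}$. By Lemma~\ref{lemmakey} this amounts to showing
\[ T_{X_{k,d}}\NL([\Pi_1]+\lambda[\Pi_2])/T_{X_{k,d}}\NL([\Pi_1],[\Pi_2])=\ker\nolimits_L(\psi_1+\nu(\lambda)\psi_2)=0, \]
where $\psi_1,\psi_2$ are the pairings on $(S/I_1\cap I_2)_d\times (S/I_1\cap I_2)_{kd-2k-2}$ and $\nu\colon\C^*\to\C^*$ is the injective function supplied by that lemma. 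Note that $d\geq 6,\ k\geq 2$ force $d\leq kd-2k-2$, so these are the correct graded pieces.

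Next I would invoke Lemma~\ref{lemGram}, according to which the Gram matrix of $\psi_1+\nu(\lambda)\psi_2$ on these pieces is, after reordering, block diagonal with no zero rows, the blocks being $(1)$, $\begin{pmatrix}\nu(\lambda)\end{pmatrix}$, $\begin{pmatrix}1 & \nu(\lambda)\end{pmatrix}$ and the $3\times 3$ block
\[ B=\begin{pmatrix} 0 & 1 & \nu(\lambda)\\ 1 & -1 & 0\\ \nu(\lambda) & 0 & -\nu(\lambda)\end{pmatrix}. \]
Since the left kernel of a block-diagonal matrix splits as the sum of the left kernels of its blocks, and since each of the three smaller blocks has linearly independent rows as soon as $\nu(\lambda)\neq 0$, the left kernel vanishes if and only if $B$ is invertible. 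A direct expansion gives $\det B=\nu(\lambda)\bigl(1+\nu(\lambda)\bigr)$; as $\nu(\lambda)\in\C^*$, this is nonzero precisely when $\nu(\lambda)\neq -1$.

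It remains to rule out $\nu(\lambda)=-1$ for $\lambda\in\Q\setminus\{0,1\}$, which is the crux of the argument. Because $\nu$ is injective, there is at most one value of $\lambda$ with $\nu(\lambda)=-1$, and I claim it is $\lambda=1$. Indeed, for $\lambda=1$ the class $[\Pi_1]+[\Pi_2]$ is the class of the quadric $\Pi_1\cup\Pi_2$, so $\NL([\Pi_1]+[\Pi_2])$ parametrizes hypersurfaces containing this quadric and strictly contains $\NL([\Pi_1],[\Pi_2])$ as a sublocus of positive codimension (see the introduction); using smoothness of the latter at $X_{k,d}$, its tangent space is strictly larger, i.e.\ $\ker\nolimits_L(\psi_1+\nu(1)\psi_2)\neq 0$. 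Since $\nu(1)\in\C^*$, the block analysis above applies verbatim at $\lambda=1$, and a nonzero left kernel forces $B$ to be singular, whence $\nu(1)=-1$. Injectivity of $\nu$ then yields $\nu(\lambda)\neq -1$ for all $\lambda\neq 1$, in particular for all $\lambda\in\Q\setminus\{0,1\}$.

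Combining these steps, for every $\lambda\in\Q\setminus\{0,1\}$ all blocks of the Gram matrix are nonsingular, so $\ker\nolimits_L(\psi_1+\nu(\lambda)\psi_2)=0$, the two tangent spaces coincide, and the Hodge loci agree in a neighborhood of $X_{k,d}$. I expect the genuine work to lie not here but in establishing the block decomposition of Lemma~\ref{lemGram} through the monomial-pairing combinatorics of $S/I_1\cap I_2$; the hypothesis $d\geq 6$ enters there, ensuring (via the remark preceding Lemma~\ref{lemGram}) that the degree-$d$ monomials produce only the listed blocks. The one conceptual subtlety internal to this theorem is the identification of the unique exceptional value $\nu(\lambda)=-1$ with the excluded geometric parameter $\lambda=1$, for which I rely on the elementary but decisive input that the quadric $\Pi_1\cup\Pi_2$ enlarges the Hodge locus at $\lambda=1$.
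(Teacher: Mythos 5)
Your proposal is correct and follows essentially the same route as the paper: reduce to the left kernel of $\psi_1+\nu(\lambda)\psi_2$ via Lemma~\ref{lemmakey}, use the block decomposition of Lemma~\ref{lemGram}, compute $\det B=\nu(\lambda)(1+\nu(\lambda))$, and use injectivity of $\nu$ together with the known excess at $\lambda=1$ to conclude $\nu(\lambda)\neq -1$ for $\lambda\notin\{0,1\}$. Your explicit identification of $\nu(1)=-1$ via the quadric $\Pi_1\cup\Pi_2$ is a slightly more careful rendering of the step the paper dispatches with ``it is well-known that for $\lambda=0$ and $\lambda=1$ this is the case.''
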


\begin{proof}
Recall that $\NL([\Pi_1],[\Pi_2])\subset \NL([\Pi_1]+\lambda [\Pi_2])$ and $T_{X_{k,d}}\NL([\Pi_1],[\Pi_2])\subset T_{X_{k,d}} \NL([\Pi_1]+\lambda [\Pi_2])$. Moreover, the quotient of the two tangent spaces equals the left kernel of
\[ (S/I_1\cap I_2)_d \times (S/I_1\cap I_2)_{kd-2k-2}\to (S/I_1)_{(k+1)(d-2)}/(M_1-\nu(\lambda) M_2)\stackrel{\sim}{\longrightarrow} \C\]
by Lemma~\ref{lemmakey}.
From Lemma~\ref{lemGram} it follows that the associated Gram-matrix is a block matrix whose blocks are square with determinant, $1,\nu(\lambda),\nu(\lambda)(\nu(\lambda)+1)$ or have full rowrank. Hence if $\nu(\lambda)\not  \in \{0,-1\}$ we have that the matrix has full row rank and that there is no left kernel.

However, if $\nu(\lambda)\in\{0,-1\}$ then the $3\times 3$-block has zero determinant, yielding a nontrivial left kernel.
Hence there are two values of $\lambda$ for which there is a left kernel. It is well-known that for $\lambda=0$ and $\lambda=1$ this is the case. This finishes the proof.
\end{proof}

\begin{remark}
In the remaining cases $d=3,k=3,4$, $d=4,k=2$ and $d=5,k=2$ there is always excess tangent dimension, hence our method does not apply. For these case the main question is whether this happens because $\dim \NL([\Pi_1]+\lambda[\Pi_2])>\dim \NL([\Pi_1],[\Pi_2])$ (as for $d=4,k=1$ or $d=5,k=2$) or that this locus is nonreduced (as happens for $c=1,f\geq 5,\lambda\neq1$).
\end{remark}

\bibliographystyle{plain}
\bibliography{remke2}

\end{document}